\newtheorem{defi}{Definition}[section]
\newtheorem{theorem}[defi]{Theorem}
\newtheorem{lemma}[defi]{Lemma}
\newtheorem{prop}[defi]{Proposition}
\newtheorem{corollary}[defi]{Corollary}
\theoremstyle{definition}
\newtheorem{example}[defi]{Example}
\newtheorem{remark}[defi]{Remark}
\newcommand{\zz}{{\textcircled z}}
\begin{document}

\keywords{Zig-zag product, Adjacency matrix, Permutation matrix, Equitable partition.}

\title{Permutational powers of a graph}

\author{Matteo Cavaleri}
\address{Matteo Cavaleri, Universit\`{a} degli Studi Niccol\`{o} Cusano - Via Don Carlo Gnocchi, 3 00166 Roma, Italia}
\email{matteo.cavaleri@unicusano.it}

\author{Daniele D'Angeli}
\address{Daniele D'Angeli, Institut f\"{u}r Diskrete Mathematik, Technische Universit\"{a}t Graz \ \ Steyrergasse 30, 8010 Graz, Austria}
\email{dangeli@math.tugraz.at}

\author{Alfredo Donno}
\address{Alfredo Donno, Universit\`{a} degli Studi Niccol\`{o} Cusano - Via Don Carlo Gnocchi, 3 00166 Roma, Italia}
\email{alfredo.donno@unicusano.it}

\begin{abstract}
This paper introduces a new graph construction, the permutational power of a graph, whose adjacency matrix is obtained by the composition of a permutation matrix with the adjacency matrix of the graph. It is shown that this construction recovers the classical zig-zag product of graphs when the permutation is an involution, and it is in fact more general. We start by discussing necessary and sufficient conditions on the permutation and on the adjacency matrix of a graph to guarantee their composition to represent an adjacency matrix of a graph, then we focus our attention on the cases in which the permutational power does not reduce to a zig-zag product. We show that the cases of interest are those in which the adjacency matrix is singular. This leads us to frame our problem in the context of equitable partitions, obtained by identifying vertices having the same neighborhood. The families of cyclic and complete bipartite graphs are treated in details.
\end{abstract}

\maketitle

\begin{center}
{\footnotesize{\bf Mathematics Subject Classification (2010)}: 05C50, 05C76, 05C78.}
\end{center}

\section{Introduction}
Graphs are among the most popular and useful tools used in Mathematics to model aspects of real life. Their relatively simple and natural definition makes these objects very versatile and adaptable in many areas of mathematical research.
Usually graphs are studied via their adjacency matrix, and it is an interesting task to investigate the relationship between the  geometrical properties of a graph and the algebraic properties of the corresponding adjacency matrix. Moreover, there exists a correspondence between operations that can be performed on graphs and operations on matrices. See, for example, \cite{wreathnoi,ijgt, hand, imrich, sabidussi}.
In the last years, a very intriguing product of graphs that has been introduced is the so called \emph{zig-zag product}: iterated applications of this product allow to construct infinite families of expanders.\\
\indent The zig-zag product of two graphs was introduced in \cite{annals}
as a construction allowing to produce, starting from a large graph $G$
and a small graph $H$, a new graph $G\zz H$ which inherits
the size from the large one, the degree from the small one, and
the expansion property from both the graphs. In \cite{annals}, it
is explicitly described how iteration of this construction,
together with the standard squaring, provides an infinite family
of constant-degree expander graphs, starting from a particular
graph representing the building block of this construction (see \cite{lubotullio} for further details on expanders). Topological properties of the zig-zag product have been studied in the paper \cite{zigtree}. It is worth mentioning that the zig-zag product has also interesting connections with Geometric group theory, as it is true that the zig-zag product of the Cayley graphs of two groups returns the Cayley graph of the semi-direct product of the groups \cite{groups}. See also the book \cite{scaralibro} for connections of the zig-zag product with other graph compositions and random walks.

\indent In the present paper, we show that one can in any case replace the role of $G$ by an appropriate permutation matrix $P$ of order $2$. In other words, if $G$ has $k$ vertices, one can realize the adjacency matrix of the zig-zag product $G\zz H$ as the product $\tilde{A}_HP\tilde{A}_H$, where $A_H$ is the adjacency matrix of the graph $H$ (so that $\tilde{A}_H=I_k\otimes A_H$ can be regarded as the adjacency matrix of the graph obtained by taking $k\geq 1$ disjoint copies of $H$), and $P$ is a symmetric permutation matrix. Keeping this in mind, one can ask if it is possible to get the adjacency matrix of some graph (i.e., a symmetric $\tilde{A}_HP\tilde{A}_H$) by considering also permutation matrices $P$ that are not symmetric. The \emph{permutational power graph} corresponds exactly to this case. Now it is clear that, whenever we have $\tilde{A}_HP\tilde{A}_H$ symmetric (and so it can interpreted as the adjacency matrix of an undirected graph) we can ask if the same graph can be obtained via the ``classical" zig-zag product (by using a symmetric $P$). This problem is strictly related to the singularity of the matrix $A_H$. In the case in which $A_H$ is invertible, there is no chance to get a symmetric $\tilde{A}_HP\tilde{A}_H$ with a nonsymmetric $P$. For this reason, we focus our attention on the cases where $A_H$ is singular.\\
\indent In fact, the reason why the graph $H$ is singular says a lot about the nature of its permutational power.
The most general case is when we only know that the adjacency matrix of $H$ is singular. We consider this case in Section \ref{algebra}, proving that a permutation $p$ induces a permutational power of $H$ if its \emph{projection on the range} of the adjacency matrix of $H$ is symmetric. Moreover, a permutational power of $H$ with respect to $p$ can be obtained by a classical zig-zag product if there exists an involution $q$ sharing with $p$ the same projection on the range of the adjacency matrix of $H$. \\
One of the reasons why the adjacency matrix of $H$ may be singular is that there are vertices sharing the same neighborhood: this is one of the cases analyzed in Section \ref{eqp} via the notion of neighborhood equitable partition. Actually, in the framework of random matrices and random graphs, it is conjectured that this is the generic reason why $H$ could be singular (for the symmetric case see \cite{matteo}). The graph obtained collapsing the equivalent vertices  together (in a precise way), can be singular or not. In the latter case every permutational power of $H$ can be obtained via the classical zig-zag product (Theorem \ref{mmm}): the easiest examples are the complete bipartite graphs. On the other extreme, the adjacency matrix of $H$ can be singular even if there are no vertices sharing the same neighborhood: the easiest examples are cyclic graphs $C_n$ with $n$ divisible by $4$.\\
\indent In this paper we show, among other results, that:
\begin{itemize}
\item the symmetry of $\tilde{A}_HP\tilde{A}_H$ only depends on the symmetry of the projection of $P$ on the range of the matrix $\tilde{A}_H$ (see Theorem \ref{proietto} and Corollary \ref{proiettosim});
\item there are permutational powers that do not appear as classical zig-zag products (see Corollary \ref{nozig});
\item we can restrict our attention, in a specific sense, to graphs whose adjacency matrix is singular with pairwise distinct rows
  (see Corollary \ref{teo}, Corollary \ref{coro2} and Theorem \ref{mmm}).
\end{itemize}
Moreover, the cases in which $H$ is a cycle or a complete bipartite graph are studied in details (see Sections \ref{cicli} and \ref{ciclo8}, and Section \ref{bipart}).

\section{Preliminaries and motivations}\label{sectionpreliminaries}

In this paper we will denote by $G = (V_G,E_G)$ a finite undirected graph with vertex set $V_G$ and edge set $E_G$. If
$\{u,v\}\in E_G$, we say that the vertices $u$ and $v$ are adjacent
in $G$, and we write $u\sim v$. Observe that loops and
multi-edges are allowed. A path of length $t$ in $G$ is a sequence
$u_0,u_1,\ldots, u_t$ of vertices such that $u_i\sim u_{i+1}$. The
graph is connected if, for every $u,v\in V_G$, there exists a path
$u_0,u_1,\ldots, u_t$ in $G$ such that $u_0=u$ and $u_t = v$.

Suppose $|V_G|=n$. We denote by $A_G = (a_{u,v})_{u,v\in V_G}$ the
\textit{adjacency matrix} of $G$, i.e., the square matrix of size
$n$ whose entry $a_{u,v}$ is equal to the number of edges joining
$u$ and $v$. The \textit{degree} of a
vertex $u\in V_G$ is defined as $\deg (u) = \sum_{v\in V_G}a_{u,v}$. In
particular, we say that $G$ is {\it regular} of degree $d$, or
$d$-regular, if $\deg(u)=d$, for each $u\in V_G$. For
such a graph $G$, the {\it normalized adjacency matrix} is defined
as $A_G' = \frac{1}{d}A_G$.

We recall now the definition of the zig-zag product of two graphs. This is a noncommutative graph
product producing a graph whose degree depends only on the degree
of the second component graph, and providing large and sparse
graphs. We need some notation.\\
\indent Let $G = (V_G,E_G)$ be a $d$-regular graph
over $n$ vertices. Suppose
that we have a set of $d$ colors (labels), that we identify with
the set $[d]=\{1,2,\ldots,d\}$.
We can assume that, for each vertex $v\in V_G$, the edges incident
to $v$ are labelled by a color $h\in [d]$ near $v$, and that any
two distinct edges issuing from $v$ have a different color near
$v$. This allows to define the \textit{rotation map}
$\textrm{Rot}_{G}:V_G\times [d]\longrightarrow V_G\times [d]$ such
that, for all $v\in V_G$ and $h\in [d]$,
$$
\textrm{Rot}_{G}(v,h) = (w,k)
$$
if there exists an edge joining $v$ and $w$ in $G$, which is
colored by the color $h$ near $v$ and by the color $k$ near $w$.
Note that it may be $h\neq k$. Moreover, the composition $\textrm{Rot}_{G}\circ
\textrm{Rot}_{G}$ is the identity map.

\begin{defi}\label{defizigzag}
Let $G= (V_G,E_G)$ be a $d_G$-regular graph, with $|V_G| =
n$, and let $H = (V_H,E_H)$ be a $d_H$-regular graph such that
$|V_H| = d_G$.
Let $\textrm{Rot}_{G}$ (resp. $\textrm{Rot}_{H}$) denote the
rotation map of $G$ (resp. $H$). The {\it zig-zag product}
$G\zz H$ is the regular graph of degree $d_H^2$ with vertex
set $V_G\times V_H$, identified with the set $V_G\times
[d_G]$, and whose edges are described by the rotation map
$$
\textrm{Rot}_{G\zz H}((v,k),(i,j)) = ((w,l),(j',i')),
$$
for all $v\in V_G, k\in [d_G], i,j \in [d_H]$, if:
\begin{enumerate}
\item $\textrm{Rot}_{H}(k,i) = (k',i')$,
\item $\textrm{Rot}_{G}(v,k') = (w,l')$,
\item $\textrm{Rot}_{H}(l',j) = (l,j')$,
\end{enumerate}
where $w\in V_G$, $l,k',l'\in [d_G]$ and $i',j'\in [d_H]$.
\end{defi}
Observe that labels in $G\zz H$ are elements from $[d_H]^2$.
The vertex set of $G \zz H$ is partitioned into $n$ clouds, indexed by the
vertices of $G$, where by definition the $v$-cloud, with $v\in
V_G$, is constituted by the vertices $(v,1), (v,2), \ldots,
(v,d_G)$. Two vertices $(v,k)$ and $(w,l)$ of $G \zz H$ are
adjacent in $G \zz H$ if it is possible to go from $(v,k)$ to
$(w,l)$ by a sequence of three steps of the following form:
\begin{enumerate}
\item a first step \lq\lq zig\rq\rq within the initial cloud, from the vertex $(v,k)$ to the vertex $(v,k')$,
described by $\textrm{Rot}_{H}(k,i) = (k',i')$;
\item a second step jumping from the $v$-cloud to the $w$-cloud, from the vertex $(v,k')$ to the vertex $(w,l')$, described by
$\textrm{Rot}_{G}(v,k') = (w,l')$;
\item a third step \lq\lq zag\rq\rq within the new cloud, from the vertex $(w,l')$ to the vertex $(w,l)$, described by $\textrm{Rot}_{H}(l',j) =
(l,j')$.
\end{enumerate}

\begin{example} \label{firstexample}
Consider the graphs $G$ and $H$ in Fig. \ref{factorgraphs}.

\begin{figure}[h]
\begin{center}
\psfrag{1}{$1$}\psfrag{2}{$2$}\psfrag{3}{$3$}\psfrag{4}{$4$}

\psfrag{a}{$a$}\psfrag{b}{$b$}\psfrag{c}{$c$}

\psfrag{A}{$A$} \psfrag{B}{$B$}\psfrag{G}{$G$}\psfrag{H}{$H$}
\includegraphics[width=0.6\textwidth]{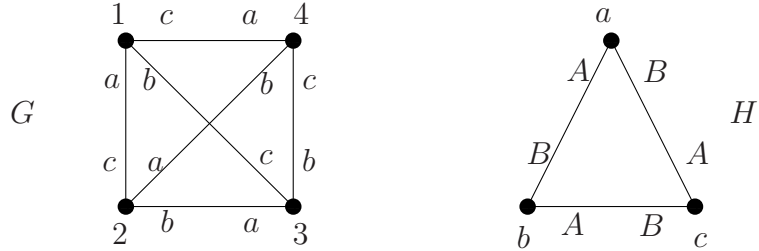} \caption{The graphs $G$ and $H$ of Example \ref{firstexample}.} \label{factorgraphs}
\end{center}
\end{figure}

It follows from Definition \ref{defizigzag} that the graph $G\zz
H$, which is depicted in Fig. \ref{zzproduct}, is the graph with vertex set
$\{1,2,3,4\}\times\{a,b,c\}$, whose edges are labelled by ordered
pairs $(i,j)$ from $\{A,B\}^2$. If we ask, for instance, which are the vertices adjacent to $(1,a)$ in $G\zz H$, we have to take into account that:
$$
\textrm{Rot}_{H}(a,A)= (b,B) \qquad \textrm{Rot}_{H}(a,B) = (c,A)
$$
$$
\textrm{Rot}_{G}(1,b) = (3,c) \qquad \textrm{Rot}_{G}(1,c) = (4,a)
$$
$$
\textrm{Rot}_{H}(c,A)=(a,B) \quad \textrm{Rot}_{H}(c,B)=(b,A) \quad \textrm{Rot}_{H}(a,A)= (b,B) \quad \textrm{Rot}_{H}(a,B) = (c,A).
$$
In this way, we conclude that the vertex $(1,a)$ in $G\zz H$ is adjacent to the vertices $(3,a), (3,b), (4,b), (4,c)$.

\begin{figure}[h]
\begin{center}
\psfrag{1a}{\scriptsize{$(1,a)$}}\psfrag{1b}{\scriptsize{$(1,b)$}}\psfrag{1c}{\scriptsize{$(1,c)$}}
\psfrag{2a}{\scriptsize{$(2,a)$}}\psfrag{2b}{\scriptsize{$(2,b)$}}\psfrag{2c}{\scriptsize{$(2,c)$}}

\psfrag{3a}{\scriptsize{$(3,a)$}}\psfrag{3b}{\scriptsize{$(3,b)$}}\psfrag{3c}{\scriptsize{$(3,c)$}}
\psfrag{4a}{\scriptsize{$(4,a)$}}\psfrag{4b}{\scriptsize{$(4,b)$}}\psfrag{4c}{\scriptsize{$(4,c)$}}
\includegraphics[width=0.37\textwidth]{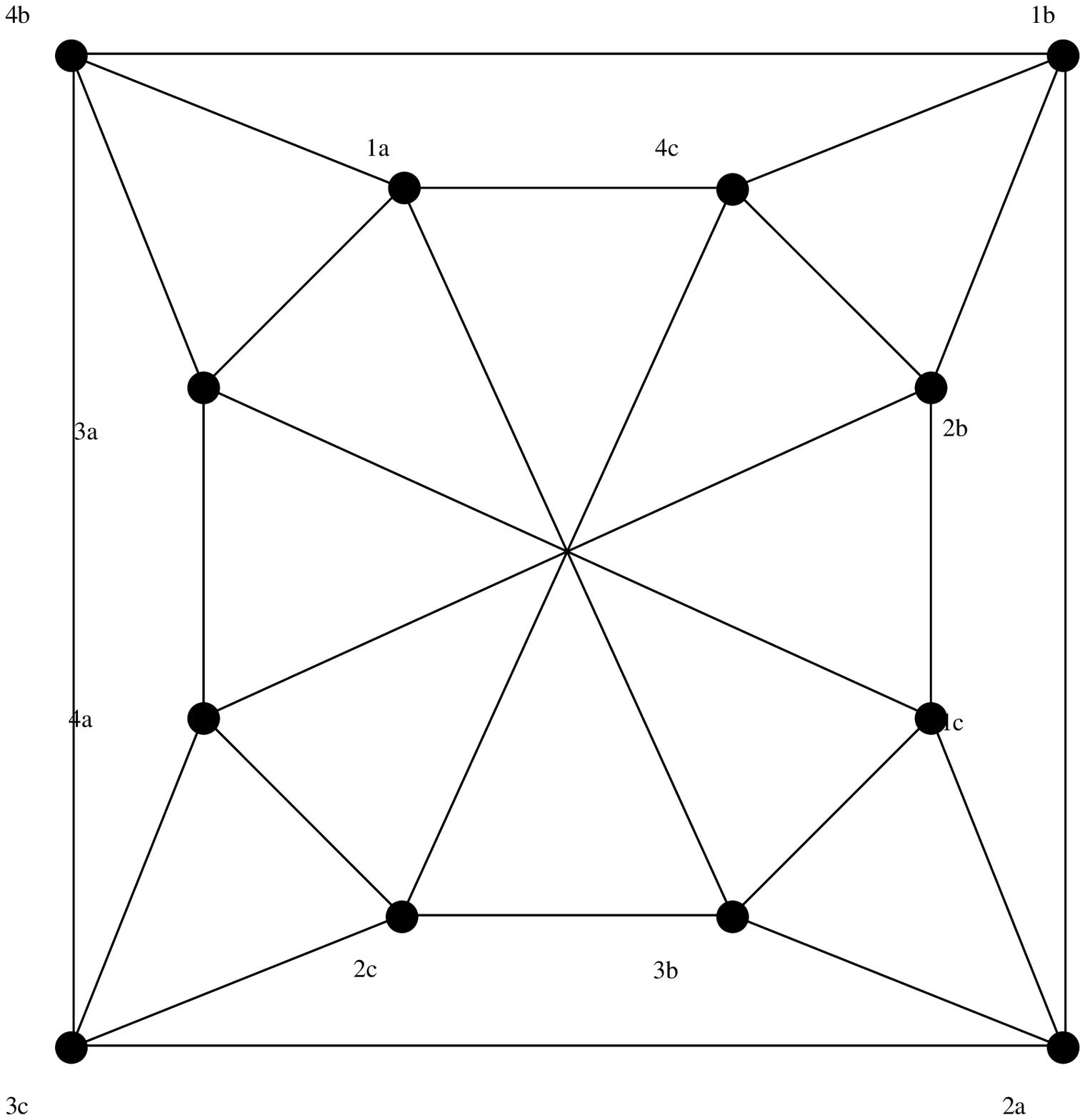} \caption{The graph $G\zz H$ of Example \ref{firstexample}.} \label{zzproduct}
\end{center}
\end{figure}
\end{example}

Let us analyze the adjacency matrix of $G\zz H$. Let $A_{G}$ (resp. $A_{H}$) be the
adjacency matrix of the graph $G$ (resp. $H$). Let $I_n$ denote the identity matrix of size $n$, for each $n\geq 1$. It
follows from the definition of zig-zag product that the adjacency matrix of $G \zz H$ is
$A_{G\zz H}=(I_{|V_G|}\otimes A_{H})P_G(I_{|V_G|}\otimes A_{H})$ (see
\cite{annals}), where $P_{G}$ is the permutation matrix of size $|V_G||V_H|$
associated with the map $\textrm{Rot}_{G}$, i.e.,
$$
{P_G}_{(v,k),(w,l)} =
\begin{cases}
1&\text{if}\ v\sim w \text{ in }G \ \text{by an edge labelled } k\ \text{near}\ v \text{ and } l\ \text{near } w\\
0&\text{otherwise}.
\end{cases}
$$
\noindent If $G$ is the graph of Example \ref{firstexample}, we have
$$
P_G = \left(
                    \begin{array}{ccc|ccc|ccc|ccc}
                       0 & 0 & 0 & 0 & 0 & 1 & 0 & 0 & 0 & 0 & 0 & 0 \\
                       0 & 0 & 0 & 0 & 0 & 0 & 0 & 0 & 1 & 0 & 0 & 0 \\
                       0 & 0 & 0 & 0 & 0 & 0 & 0 & 0 & 0 & 1 & 0 & 0 \\   \hline
                       0 & 0 & 0 & 0 & 0 & 0 & 0 & 0 & 0 & 0 & 1 & 0 \\
                       0 & 0 & 0 & 0 & 0 & 0 & 1 & 0 & 0 & 0 & 0 & 0 \\
                       1 & 0 & 0 & 0 & 0 & 0 & 0 & 0 & 0 & 0 & 0 & 0 \\  \hline
                       0 & 0 & 0 & 0 & 1 & 0 & 0 & 0 & 0 & 0 & 0 & 0 \\
                       0 & 0 & 0 & 0 & 0 & 0 & 0 & 0 & 0 & 0 & 0 & 1 \\
                       0 & 1 & 0 & 0 & 0 & 0 & 0 & 0 & 0 & 0 & 0 & 0 \\    \hline
                       0 & 0 & 1 & 0 & 0 & 0 & 0 & 0 & 0 & 0 & 0 & 0 \\
                       0 & 0 & 0 & 1 & 0 & 0 & 0 & 0 & 0 & 0 & 0 & 0 \\
                       0 & 0 & 0 & 0 & 0 & 0 & 0 & 1 & 0 & 0 & 0 & 0 \\ \end{array}
                  \right).$$
Observe that $P_G^2 = I_{12}$, as the rotation map is an involution. Equivalently, $P_G$ is a symmetric matrix.

For each positive integer $k$, put $\tilde{A}_H = I_k \otimes A_H$ (observe that, for $k=1$, one has $\tilde{A}_H=A_H$).
Our paper moves from the following remark: if we are given a graph $H$, and we consider $k$ copies of such graph, and we are also given a symmetric permutation matrix $P$ of size $k|V_H|$ (i.e., a matrix corresponding to a permutation of order $2$ of $k|V_H|$ elements), then the symmetric matrix $M=\tilde{A}_H P \tilde{A}_H$ can be regarded as the adjacency matrix of a graph composition of type \lq\lq zig-zag\rq\rq, where the jump steps are codified by the matrix permutation $P$. In other words, the first factor graph is not essential to perform the zig-zag construction, one just needs the permutation matrix $P$ describing the rotation map.

\section{Permutational powers of a graph}
The concluding remark of Section \ref{sectionpreliminaries} can be formalized as follows. For every $n\geq 1$, let $Sym(n)$ denote the symmetric group on $n$ elements. Take a regular graph $H$ on $m$ vertices, and fix a positive integer $k\geq 1$. Let $P$ be a symmetric matrix permutation on $km$ elements, that is, the permutation $p\in Sym(km)$ associated with $P$ is the composition of disjoint transpositions (with possibly some fixed elements). Let us identify $V_H$ with the set $\{0,1,\ldots, m-1\}$, and similarly identify the $km$ vertices obtained by taking $k$ copies of $H$ with the set $\{0,1,\ldots, km-1\}$. Observe that each number $x \in \{0,1,\ldots, km-1\}$ admits a unique representation as
$$
x = im + j, \qquad \textrm{with } i=0,1,\ldots,k-1 \ \textrm{ and } j=0,1,\ldots, m-1.
$$
With this interpretation, we can think that the vertex $x$ is the $j$-th vertex belonging to the $i$-th copy of the graph $H$.\\
\indent Let us construct now a labelled $m$-regular graph $G$ on $k$ vertices as follows. Vertices will be named $0,1,\ldots, k-1$, whereas edges will have labels (colors) $0,1,\ldots, m-1$ around each vertex.
More precisely, if the transposition $\tau = (s \ t)$ appears in $p$, with $s = i_sm+j_s$ and $t = i_tm+j_t$, then we connect the vertices $i_s$ and $i_t$ in $G$ by an edge labelled $j_s$ near the vertex $i_s$ and by $j_t$ near the vertex $i_t$. If $u= i_um+j_u$ is an element fixed by $p$, there will be a loop at the vertex $i_u$ with two labels equal to $j_u$. In this situation, the graph with adjacency matrix $\tilde{A}_HP\tilde{A}_H$ coincides with the graph $G\zz H$, where $G$ has been constructed as described above.

\begin{example}\label{senzaprimo}
Consider $3$ copies of the cyclic graph $C_4$ on $4$ vertices, with vertex set $\{0,1,2,3\}$, and the following permutation of order $2$:
$$
p = (0 \ 11) (1 \ 9) (2 \ 5) (3 \ 6) (4\ 10) (7\ 8)
$$
on $12$ elements, and let $P$ be the associated permutation matrix. Then the matrix $(I_3 \otimes A_{C_4})P(I_3\otimes A_{C_4})$ is symmetric, and it is nothing but the adjacency matrix of the zig-zag product $G\zz C_4$, where $G$ is the labelled graph depicted in Fig. \ref{figsenzaprimo}.

\begin{figure}[h]
\begin{center}
\psfrag{a}{$0$}\psfrag{b}{$1$}\psfrag{c}{$2$}
\psfrag{0}{$0$}\psfrag{1}{$1$}
\psfrag{2}{$2$} \psfrag{3}{$3$}
\psfrag{G}{$G$}
\psfrag{C4}{$C_4$} \psfrag{GC4}{$G\zz C_4$}

\psfrag{a0}{$0$} \psfrag{a1}{$4$}\psfrag{a2}{$2$}\psfrag{a3}{$6$}
\psfrag{b0}{$3$} \psfrag{b1}{$11$}\psfrag{b2}{$1$}\psfrag{b3}{$9$}
\psfrag{c0}{$8$} \psfrag{c1}{$7$}\psfrag{c2}{$10$}\psfrag{c3}{$5$}
\includegraphics[width=0.68\textwidth]{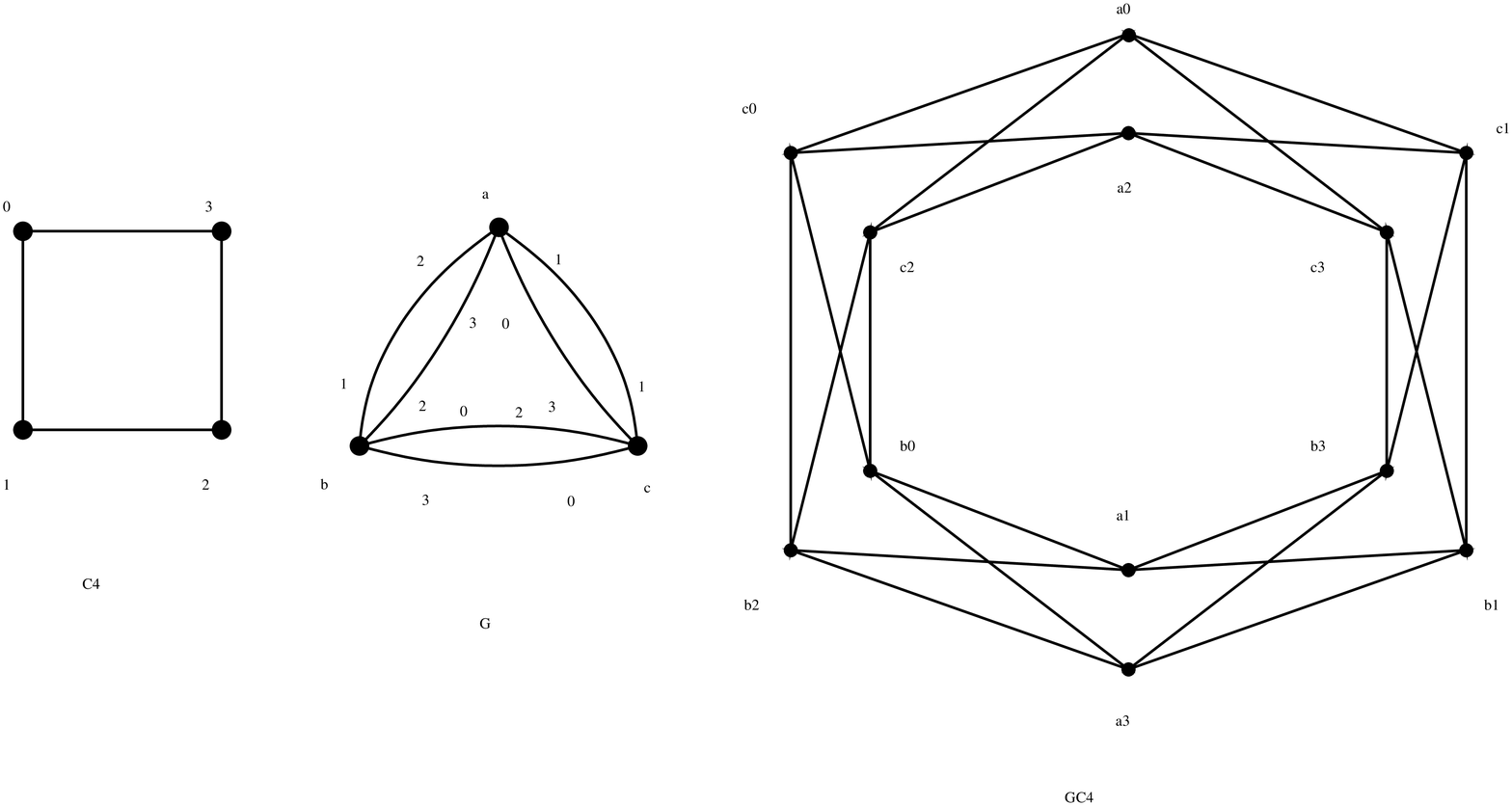} \caption{The graph $G \zz C_4$ of Example \ref{senzaprimo}.} \label{figsenzaprimo}
\end{center}
\end{figure}
\end{example}

\begin{remark}
Even if the zig-zag product has been defined in \cite{annals} for regular graphs, in order to construct increasing sequences of regular expander graphs, our construction shows that one can also start from a nonregular graph $H$. In fact, if one takes $k$ copies of $H$ and a permutation matrix on $k|V_H|$ elements such that the product $M=\tilde{A}_HP\tilde{A}_H$ is symmetric, then $M$ can be regarded as the adjacency matrix of a graph obtained from $H$ by a composition of type \lq\lq zig-zag\rq\rq.
\end{remark}

\begin{example}\label{esempiocaramellina}
In Fig. \ref{straney}, two copies of a nonregular graph $H$ on $6$ vertices, denoted $H_0$ and $H_1$, are represented. Take the permutation $p=(0\ 4\ 6\ 3\ 7\ 5\ 1\ 8)(2\ 9)(10\ 11)$ on $12$ elements. Construct the graph $G$ associated with $p$ (see Fig. \ref{fignonundirected}).
\begin{figure}[h]
\begin{center}
\psfrag{4,0}{$4,0$}\psfrag{3,1}{$3,1$}\psfrag{1,2}{$1,2$}
\psfrag{2,0}{$2,0$}\psfrag{1,5}{$1,5$}
\psfrag{2}{$2$} \psfrag{3}{$3$}
\psfrag{H0}{$H_0$} \psfrag{H1}{$H_1$}
\psfrag{0,3}{$0,3$} \psfrag{4}{$4,5$}\psfrag{5}{$5,4$}\psfrag{0,4}{$0,4$}
\psfrag{5,1}{$5,1$}
\includegraphics[width=0.45\textwidth]{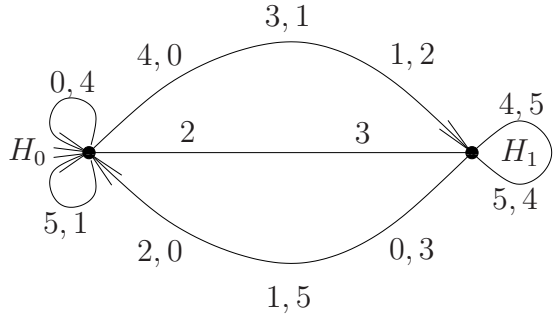} \caption{The graph $G$ of Example \ref{esempiocaramellina}.} \label{fignonundirected}
\end{center}
\end{figure}
The graph resulting from the construction described above appears in the bottom of Fig. \ref{straney}. Notice that it is not a regular graph. If, for instance, we ask which are the vertices in $G\zz H$ which are adjacent to the vertex $2$, we have to think that $2$ is a vertex belonging to the copy indexed by $0$, and its neighbors are the vertices $0,1,3$. Such vertices are mapped by $p$, respectively, to the vertices $4,8,7$. Now, the only neighbor of $4$ in the copy $H_0$ is $3$; the neighbor of $7$ in $H_1$ is $8$; finally, the neighbors of $8$ in $H_1$ are the three vertices $6,7,9$. We then conclude that the vertices adjacent to $2$ are exactly the vertices $3,8,6,7,9$.

\begin{figure}[h]
\begin{center}
\psfrag{4}{$4$}\psfrag{5}{$5$}\psfrag{6}{$6$}
\psfrag{0}{$0$}\psfrag{1}{$1$}
\psfrag{2}{$2$} \psfrag{3}{$3$}

\psfrag{H0}{$H_0$}
\psfrag{H1}{$H_1$} \psfrag{GH}{$G\zz H$}

\psfrag{7}{$7$} \psfrag{8}{$8$}\psfrag{9}{$9$}\psfrag{10}{$10$}
\psfrag{11}{$11$}
\includegraphics[width=0.6\textwidth]{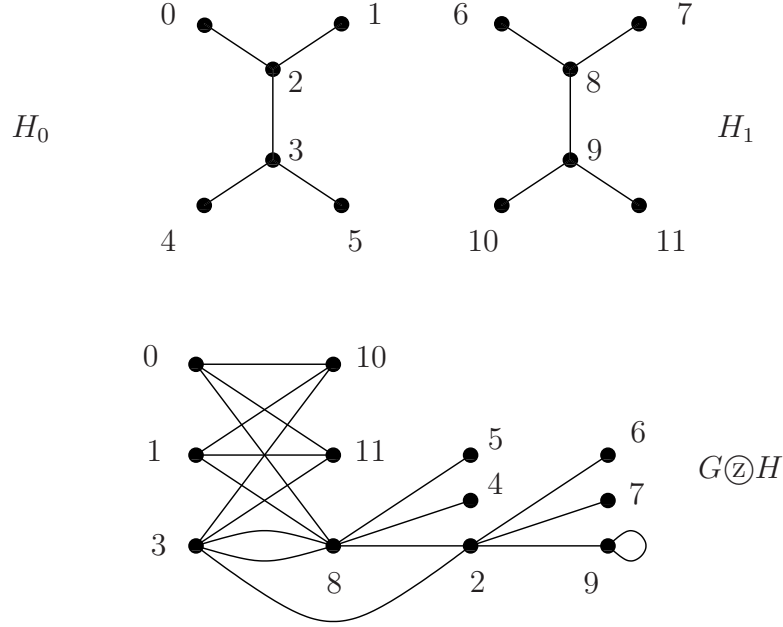} \caption{The graphs $H_0,H_1, G\zz H$ of Example \ref{esempiocaramellina}.} \label{straney}
\end{center}
\end{figure}
\end{example}

Notice that in Example \ref{senzaprimo} the graph $G$ constructed starting from the permutation is undirected, whereas it is directed in Example \ref{esempiocaramellina}, due to the fact that the order of the permutation is not $2$. In Fig. \ref{fignonundirected} the labels in the arc directed from the copy $0$ to the copy $1$ must be interpreted as follows: $4,0$ corresponds to the fact that $p(4)=6\cdot 1 + 0 = 6$; $3,1$ to the fact that $p(3) = 6\cdot 1 + 1=7$ and $1,2$ to the fact that $p(1) =6\cdot 1 + 2= 8$. Similarly, the loop at the $0$ copy starting with $5$ and ending with $1$ corresponds to the fact $p(5)=6\cdot 0 + 1=1$. The undirected edge and the undirected loops correspond to the transpositions $(2\ 9)$ and $(10\ 11)$, respectively.

One can also ask what happens when the matrix $\tilde{A}_H P \tilde{A}_H$ is not symmetric: in this situation, the resulting matrix can be regarded as the signed adjacency matrix of a directed graph, what leads to the possibility of defining a zig-zag product of directed graphs, containing the classical product as a particular case. This general situation will not be investigated in the present paper.\\
\indent On the other hand, it may happen that, even if the permutation $p$ is not of order $2$, anyway one has directed edges from each vertex in the neighborhood of $v$ to each vertex in the neighborhood of $w$ and viceversa, producing an undirected graph. This is the case we are interested in.

\begin{defi}
Let $H=(V_H,E_H)$ be a graph with adjacency matrix $A_H$, and let $p\in Sym (k|V_H|)$, with $k\geq 1$, with associated permutation matrix $P$. Let $\tilde{A}_H = I_k\otimes A_H$. If $M = \tilde{A}_H P \tilde{A}_H$ is symmetric, the graph whose adjacency matrix is $M$ is the \emph{permutational $k$-th power of $H$} with respect to $p$.
\end{defi}

The main questions that we address in our paper are the following.
\begin{enumerate}
\item Given a graph $H$, with adjacency matrix $A_H$, and taken a positive integer $k$, is it possible to find a nonsymmetric permutation matrix $P$ on $k|V_H|$ elements such that the matrix $\tilde{A}_H P \tilde{A}_H$ is symmetric?
\item If this is the case, under which conditions there exists a symmetric permutation matrix $Q$ such that $\tilde{A}_H P \tilde{A}_H = \tilde{A}_H Q \tilde{A}_H$? In other words, when such a permutational power of $H$ can be obtained by the classical zig-zag product?
\end{enumerate}
As an example, observe that the resulting graph in Fig. \ref{figsenzaprimo} can also be obtained by choosing the permutation $p' = (0 \ 11 \ 3 \ 6 \ 10 \ 7\ 8\ 4 \ 1\ 9\ 2\ 5)$ on $12$ elements.
However, we will see that there exist permutational powers of graphs which cannot be obtained by the classical zig-zag construction of Definition \ref{defizigzag} (see Corollary \ref{nozig}).

\subsection{An algebraic interpretation}\label{algebra}
Let us start an algebraic investigation of the symmetry condition
\begin{equation}\label{conditionsimmetry}
(\tilde{A}_HP\tilde{A}_H)^T = \tilde{A}_HP\tilde{A}_H.
\end{equation}

\begin{lemma}\label{lemmainvertible}
If the adjacency matrix $A_H$ of the graph $H$ is invertible, then any permutational power of $H$ is a zig-zag product.
\end{lemma}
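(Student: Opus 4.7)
The plan is to exploit the invertibility of $\tilde{A}_H$ to show that the only way the symmetry condition \eqref{conditionsimmetry} can hold is for $P$ itself to be symmetric; consequently $p$ is already an involution, and the permutational power coincides tautologically with the zig-zag product obtained by taking $Q=P$.

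First I would observe that $\tilde{A}_H = I_k\otimes A_H$ is symmetric (since $A_H$ is, as $H$ is undirected) and that, as a Kronecker product of invertible matrices, $\tilde{A}_H$ is invertible whenever $A_H$ is, with inverse $I_k\otimes A_H^{-1}$. Then I would transpose the identity $M=\tilde{A}_H P\tilde{A}_H$ using $\tilde{A}_H^T=\tilde{A}_H$ to obtain
\begin{equation*}
M^T=\tilde{A}_H P^T\tilde{A}_H.
\end{equation*}
The hypothesis $M^T=M$ therefore reads $\tilde{A}_H P^T\tilde{A}_H=\tilde{A}_H P\tilde{A}_H$.

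At this point I would left- and right-multiply by $\tilde{A}_H^{-1}$ to cancel $\tilde{A}_H$ from both sides, yielding $P^T=P$. Hence $P$ is a symmetric permutation matrix, equivalently the permutation $p$ is an involution. Setting $Q:=P$ gives a symmetric permutation matrix with $\tilde{A}_H P\tilde{A}_H=\tilde{A}_H Q\tilde{A}_H$, and by the discussion at the beginning of Section \ref{sectionpreliminaries} the graph with adjacency matrix $\tilde{A}_H Q\tilde{A}_H$ arises as a classical zig-zag product $G\zz H$ for the labelled graph $G$ on $k$ vertices encoded by $Q$.

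There is no genuine obstacle here: the argument is a direct cancellation once one notes that $\tilde{A}_H$ is symmetric and invertible. The only thing worth stressing in the write-up is that this proof not only produces \emph{some} symmetric $Q$ realizing the same product, but actually shows that the starting permutation $p$ must itself be an involution, so in the invertible case the two questions raised after Example \ref{esempiocaramellina} collapse into each other.
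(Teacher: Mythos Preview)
Your proposal is correct and follows essentially the same approach as the paper: both arguments use the symmetry and invertibility of $\tilde{A}_H$ to cancel it from the equation $(\tilde{A}_HP\tilde{A}_H)^T=\tilde{A}_HP\tilde{A}_H$ and deduce $P^T=P$. Your write-up is slightly more explicit about the Kronecker-product inverse and about setting $Q:=P$, but the underlying idea is identical.
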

\begin{proof}
Observe that the matrix $\tilde{A}_H$ is invertible if and only if the matrix $A_H$ is invertible. Moreover, the matrix $\tilde{A}_H$ is symmetric, as $A_H$ is symmetric. This implies that
$$
(\tilde{A}_HP\tilde{A}_H)^T = \tilde{A}_HP\tilde{A}_H \quad \Longleftrightarrow \quad P^T=P
$$
and so the graph with adjacency matrix $\tilde{A}_HP\tilde{A}_H$ is a zig-zag construction.
\end{proof}

By virtue of Lemma \ref{lemmainvertible}, the interesting cases that it is worth investigating are given by graphs $H$ whose adjacency matrix  is singular. Examples of such graphs are \cite{browerspectrum}:
\begin{itemize}
\item the cyclic graph $C_{n}$ on $n=4h$ vertices;
\item the path graph $P_n$ on $n=2h+1$ vertices;
\item all bipartite graphs with an odd number of vertices;
\item graphs with two or more vertices sharing the same neighborhood (e.g., the complete bipartite graph $K_{m,n}$).
\end{itemize}

Put $M = \tilde{A}_HP\tilde{A}_H$, with $|V_H|=m$. Let $x,y \in\{0,1,\ldots, km-1\}$, with representation
$x = i_xm+j_x$; $y = i_ym+j_y$, with $i_x,i_y \in \{0,1,\ldots, k-1\}$ and $ j_x,j_y\in \{0,1,\ldots, m-1\}$. Then we have:
\begin{eqnarray*}
M_{x,y} &=& \sum_{h,l=0}^{km-1}\widetilde{a}_{x,l}p_{l,h}\widetilde{a}_{h,y}\\
&=& \# \{j_l \sim j_x\ :\ p(i_xm+j_l) = i_ym+j_{l'}, \ j_{l'}\sim j_y\}\\
&=& \#\{j_l \sim j_x\ : \exists j_{l'}\sim j_y \ :\ \ p(i_xm+j_l) = i_ym+j_{l'}\}.
\end{eqnarray*}
If we repeat the same computation for the entry $M_{y,x}$, we deduce that Eq. \eqref{conditionsimmetry} is satisfied if and only if, for each $i_x,i_y\in \{0,1,\ldots, k-1\}$ and
$j_x,j_y\in \{0,1,\ldots, m-1\}$, one has:
$$
\#\{j_l \sim j_x : \exists j_{l'}\sim j_y  : p(i_xm+j_l) = i_ym+j_{l'}\} = \#\{j_{l'} \sim j_y: \exists j_{l}\sim j_x  :  p(i_ym+j_{l'}) = i_xm+j_{l}\}.
$$
In other words, the number of the neighbors $j_l$ of the vertex $j_x$ in the copy $i_x$ of $H$ such that $p$ maps $i_xm+j_l$ to $i_ym+j_{l'}$, where $j_{l'}$ is a neighbor of the vertex $j_y$ in the copy $i_y$, must be equal to the number of the neighbors $j_{l'}$ of the vertex $j_y$ in the copy $i_y$ of $H$ such that $p$ maps $i_ym+j_{l'}$ to $i_xm+j_l$, where $j_l$ is a neighbor of the vertex $j_x$ in the copy $i_x$.

By a similar argument, one can prove the following proposition.
\begin{prop}
Let $H$ be a graph. Let $k$ be a positive integer, and suppose that $P$ is a permutation matrix of size $k|V_H|$ such that the matrix $\tilde{A}_HP\tilde{A}_H$ is symmetric. Then also the matrix $ \tilde{A}_HP^{-1}\tilde{A}_H$ is symmetric.
\end{prop}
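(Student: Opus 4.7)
The plan is to exploit two elementary facts: first, $\tilde{A}_H = I_k \otimes A_H$ is symmetric because $A_H$ is; second, for any permutation matrix $P$ one has $P^{-1} = P^T$. Combining these should reduce everything to transposition identities.

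First I would take the transpose of the given symmetric matrix. The symmetry condition $(\tilde{A}_HP\tilde{A}_H)^T = \tilde{A}_HP\tilde{A}_H$ becomes, after pushing the transpose through the product and using $\tilde{A}_H^T = \tilde{A}_H$,
\[
\tilde{A}_H P^T \tilde{A}_H = \tilde{A}_H P \tilde{A}_H.
\]
Since $P$ is a permutation matrix, $P^T = P^{-1}$, so the symmetry hypothesis is exactly the identity
\[
\tilde{A}_H P^{-1} \tilde{A}_H = \tilde{A}_H P \tilde{A}_H. \tag{$\ast$}
\]

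Next I would compute the transpose of $\tilde{A}_H P^{-1} \tilde{A}_H$. Again using symmetry of $\tilde{A}_H$ and $(P^{-1})^T = (P^T)^{-1} = P$, we get
\[
(\tilde{A}_H P^{-1} \tilde{A}_H)^T = \tilde{A}_H P \tilde{A}_H,
\]
which by ($\ast$) equals $\tilde{A}_H P^{-1} \tilde{A}_H$. This proves that $\tilde{A}_H P^{-1} \tilde{A}_H$ is symmetric.

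There is no real obstacle here: the statement is a direct consequence of the fact that permutation matrices are orthogonal, together with the symmetry of $\tilde{A}_H$. If one prefers a combinatorial argument paralleling the one sketched before the proposition, one can instead verify the symmetry entrywise by observing that the counting condition displayed there is invariant under the substitution $p \leftrightarrow p^{-1}$ (swapping the roles of $(i_x,j_x)$ and $(i_y,j_y)$), but the matrix identity $(\ast)$ gives the cleanest route.
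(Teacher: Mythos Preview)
Your argument is correct and in fact proves the stronger statement that $\tilde{A}_H P^{-1}\tilde{A}_H$ equals $\tilde{A}_H P\tilde{A}_H$, not merely that it is symmetric. The paper does not spell out a proof here; it simply writes ``by a similar argument'', pointing back to the entrywise counting computation of $M_{x,y}$ displayed just before the proposition. That route would have you check that the combinatorial symmetry condition on the counts $\#\{j_l\sim j_x:\dots\}$ is preserved when $p$ is replaced by $p^{-1}$. Your matrix-algebraic approach bypasses all of that: once you use $P^{-1}=P^T$ and $\tilde{A}_H^T=\tilde{A}_H$, the result is a one-line transposition identity. This is genuinely simpler than the paper's suggested method, and it also makes transparent why the two permutational powers coincide as graphs, a fact the paper's phrasing leaves implicit.
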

%\begin{proof}
%Put $M=  (I_k\otimes A_H)P(I_k\otimes A_H)$ and $M'=  (I_k\otimes A_H)P^{-1}(I_k\otimes A_H)$. Then it is true that
%$$
%M'_{x,y} = M_{y,x} = M_{x,y} = M'_{y,x}.
%$$
%\end{proof}

The same argument does not apply to the whole cyclic group generated by the permutation $P$, as there exist explicit examples showing that, if the matrix $\tilde{A}_HP\tilde{A}_H$ is symmetric, then the matrix $\tilde{A}_HP^h\tilde{A}_H$ needs not to be symmetric for any integer $h$.

The matrix $A_H$ is symmetric, so that it admits all real eigenvalues, and an orthonormal basis of eigenvectors. Moreover, since we are dealing with a singular matrix, we can assume that $0$ is an eigenvalue for $A_H$. Put $\lambda_0=0$ and let $\lambda_1, \ldots, \lambda_s$ be the nonzero eigenvalues of $A_H$. Let $m_i$ be the multiplicity of $\lambda_i$, for each $i=0,1,\ldots, s$, and let us denote by $E_i$ the corresponding eigenspace, so that $E_0 = \ker A_H$.

Now, the spectrum of the matrix $\tilde{A}_H$ coincides with the spectrum of $A_H$, but the eigenvalue $\lambda_i$ has  multiplicity $km_i$, for each $i=0,1,\ldots,s$. The corresponding eigenspace is $\widetilde{E}_i=\mathbb{R}^k\otimes E_i$. Notice that, due to the symmetry of $A_H$, we have
$$
\mathbb{R}^m = \bigoplus_{i=0}^s E_i = \ker A_H \oplus \left(\bigoplus_{i=1}^s E_i\right),
$$
where the eigenspaces $E_i$ are pairwise orthogonal. Since
$$
\tilde{A}_HP\tilde{A}_H = \tilde{A}_HP^T\tilde{A}_H \quad \Longleftrightarrow \quad \tilde{A}_H(P-P^T)\tilde{A}_H = O,
$$
Eq. \eqref{conditionsimmetry} is satisfied if and only if  the matrix $P-P^T$ maps the space $\bigoplus_{i=1}^s \widetilde{E}_i$ to the space $\widetilde{E}_0 = \mathbb{R}^k\otimes \ker A_H$.

Similarly, given a permutation matrix $P$ such that $\tilde{A}_HP\tilde{A}_H$ is symmetric, and whose order is not $2$, a permutation matrix $Q$ satisfies $\tilde{A}_HP\tilde{A}_H = \tilde{A}_HQ\tilde{A}_H$ if and only if the matrix $P-Q$ maps the space $\bigoplus_{i=1}^s \widetilde{E}_i$ to the space $\widetilde{E}_0 = \mathbb{R}^k\otimes \ker A_H$.

Taking an orthonormal basis from each eigenspace $\widetilde{E}_i$, we construct an orthogonal matrix $U$ such that $U^T \tilde{A}_H U$ is diagonal. Put
$$
U = \left(
      \begin{array}{c|c}
        U_1 & U_0 \\
      \end{array}
    \right)
$$
where $U_1$ is the submatrix whose columns form a basis of the space $\bigoplus_{i=1}^s \widetilde{E}_i$, which is orthogonal to $\widetilde{E}_0$, and $U_0$ is the submatrix whose columns form a basis of  $\widetilde{E}_0$.
\begin{theorem}\label{proietto}
For any two permutation matrices $P$ and $Q$ we have
$$
\tilde{A}_HP\tilde{A}_H=\tilde{A}_HQ\tilde{A}_H \iff U^T_1 P U_1=U^T_1 Q U_1.
$$
\end{theorem}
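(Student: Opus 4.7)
The plan is to reformulate the equality $\tilde{A}_H P \tilde{A}_H = \tilde{A}_H Q \tilde{A}_H$ as $\tilde{A}_H(P-Q)\tilde{A}_H = O$, and then conjugate by the orthogonal matrix $U = (U_1 \mid U_0)$. Since $U$ is orthogonal, $UU^T = I$, and the map $X \mapsto U^T X U$ is a bijection on matrices sending $O$ to $O$; hence $\tilde{A}_H(P-Q)\tilde{A}_H = O$ if and only if
\[
\bigl(U^T \tilde{A}_H U\bigr)\bigl(U^T(P-Q)U\bigr)\bigl(U^T \tilde{A}_H U\bigr) = O,
\]
after inserting two copies of $UU^T = I$.

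The key observation is then to compute $U^T \tilde{A}_H U$ explicitly in the block decomposition induced by $U = (U_1 \mid U_0)$. By construction, the columns of $U_0$ form an orthonormal basis of $\widetilde{E}_0 = \ker \tilde{A}_H$, so $\tilde{A}_H U_0 = O$; the columns of $U_1$ form an orthonormal basis of $\bigoplus_{i=1}^{s} \widetilde{E}_i$, which is the direct sum of nonzero eigenspaces of $\tilde{A}_H$, and therefore $U_1^T \tilde{A}_H U_1$ is the diagonal matrix $\Lambda$ whose entries are the nonzero eigenvalues of $\tilde{A}_H$ (each repeated according to its multiplicity). The cross blocks vanish since the eigenspaces are mutually orthogonal. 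Consequently
\[
U^T \tilde{A}_H U \;=\; \begin{pmatrix} \Lambda & O \\ O & O \end{pmatrix},
\]
with $\Lambda$ invertible.

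Writing $U^T(P-Q)U$ in the corresponding block form, the triple product above reduces to a single nonzero block, namely $\Lambda\, \bigl(U_1^T(P-Q)U_1\bigr)\,\Lambda$ in the top-left corner, and zeros elsewhere. Since $\Lambda$ is invertible, this block vanishes if and only if $U_1^T(P-Q)U_1 = O$, i.e. $U_1^T P U_1 = U_1^T Q U_1$. Chasing the equivalences back yields the statement.

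The proof is essentially a computation in orthonormal coordinates adapted to the eigenspaces of $\tilde{A}_H$, and no step should present a real obstacle; the only thing to be careful about is to record that the $U_0$ block of $U^T \tilde{A}_H U$ is genuinely zero (so that the information in $U_0^T(P-Q)U_0$, $U_0^T(P-Q)U_1$ and $U_1^T(P-Q)U_0$ is completely killed), which is exactly why the condition lives only on the range block $U_1^T(\cdot)U_1$.
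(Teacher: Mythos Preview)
Your proof is correct and follows essentially the same idea as the paper: both exploit the orthogonal diagonalisation of $\tilde{A}_H$ by $U=(U_1\mid U_0)$ to reduce the equality $\tilde{A}_H(P-Q)\tilde{A}_H=O$ to the vanishing of the single block $U_1^T(P-Q)U_1$. The only difference is in presentation: the paper phrases the intermediate step geometrically (the condition is equivalent to $P-Q$ mapping the range $\bigoplus_{i\ge1}\widetilde{E}_i$ into the kernel $\widetilde{E}_0$, hence $(P-Q)u\perp v$ for all columns $u,v$ of $U_1$), whereas you carry out the equivalent block-matrix computation $DBD=\begin{pmatrix}\Lambda B_{11}\Lambda & O\\ O & O\end{pmatrix}$ directly and use the invertibility of $\Lambda$.
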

\begin{proof}
We have $\tilde{A}_HP\tilde{A}_H=\tilde{A}_HQ\tilde{A}_H$ if and only if the matrix $P-Q$ maps the space $\bigoplus_{i=1}^s \widetilde{E}_i$ to the space $\widetilde{E}_0 = \mathbb{R}^k\otimes \ker A_H$. This is equivalent to ask that, for every two columns $u,v$ in $U_1$, one has that $(P-Q)u$ is orthogonal to $v$. Therefore, it must be $U^T_1 (P-Q) U_1=O$, that is the claim.
\end{proof}
\begin{corollary}\label{proiettosim}
The matrix $\tilde{A}_HP\tilde{A}_H$ is symmetric if and only if $U^T_1 P U_1$ is symmetric.
\end{corollary}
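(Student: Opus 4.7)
The plan is to derive Corollary \ref{proiettosim} as an immediate consequence of Theorem \ref{proietto} by the right choice of the second permutation matrix. The key observation is that transposing a permutation matrix yields another permutation matrix (it represents the inverse permutation), so Theorem \ref{proietto} can be applied with $Q:=P^T$.

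First, I would rewrite the symmetry condition for $\tilde{A}_H P \tilde{A}_H$ in a form amenable to Theorem \ref{proietto}. Since $A_H$ is symmetric and $\tilde{A}_H=I_k\otimes A_H$, the matrix $\tilde{A}_H$ is symmetric too. Therefore
$$
(\tilde{A}_H P \tilde{A}_H)^T = \tilde{A}_H^{\,T} P^T \tilde{A}_H^{\,T} = \tilde{A}_H P^T \tilde{A}_H,
$$
which means that $\tilde{A}_H P \tilde{A}_H$ is symmetric if and only if $\tilde{A}_H P \tilde{A}_H = \tilde{A}_H P^T \tilde{A}_H$.

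Second, I would invoke Theorem \ref{proietto} with $Q=P^T$, which is legitimate because $P^T$ is a permutation matrix. The theorem then yields the equivalence
$$
\tilde{A}_H P \tilde{A}_H = \tilde{A}_H P^T \tilde{A}_H \iff U_1^T P U_1 = U_1^T P^T U_1.
$$
Since $U_1^T P^T U_1 = (U_1^T P U_1)^T$, the right-hand condition is exactly the statement that $U_1^T P U_1$ is symmetric. Chaining the two equivalences gives the corollary.

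There is essentially no obstacle here: the whole work was done in proving Theorem \ref{proietto}. The only thing to make sure of is that $P^T$ is a genuine permutation matrix so that Theorem \ref{proietto} applies — this is standard, as the transpose of a permutation matrix coincides with its inverse.
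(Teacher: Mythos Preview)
Your proof is correct and follows exactly the same approach as the paper's own proof, which simply applies Theorem \ref{proietto} with $Q=P^T$. You have merely spelled out a few more details (that $\tilde{A}_H$ is symmetric and that $P^T$ is again a permutation matrix), but the argument is identical.
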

\begin{proof}
We can apply  Theorem \ref{proietto} to the case $Q=P^T$.
\end{proof}

From an algebraic point of view, Theorem \ref{proietto} and Corollary \ref{proiettosim} give  the complete answers to our two main questions, and we are going to apply them, in the next subsections, to the case of cyclic graphs. In Section \ref{eqp}, we will be interested in finding more geometric conditions, that in particular cases (e.g., complete bipartite graphs) characterize the permutations $p$ inducing a permutational $k$-th power.

\subsection{Cyclic graphs}\label{cicli} In this section we are going to completely characterize the permutational $1$-st powers of the cyclic graph $C_n$. The adjacency matrix of $C_n$ is the circulant matrix
$$
A_{C_n}=\left(
                \begin{array}{cccccc}
                  0 & 1 & 0& \cdots  & 0 & 1\\
                  1 & 0 & 1&  &  & 0\\
                    0 & 1 & \ddots& \ddots  &&\vdots   \\
                  \vdots & & \ddots  & \ddots & \ddots & 0 \\
                    0 & & & \ddots  &\ddots &1 \\
                  1 & 0& \cdots & 0 &1 &0 \\
                \end{array}
              \right).
$$
Let us denote with $R_n$ the set of complex $n$-th roots of $1$. It is a classical fact  \cite{browerspectrum} that, for  every $\lambda\in R_n$, the vector
$v_\lambda=(\lambda, \lambda^2,\ldots, \lambda^{n-1},1)$
\begin{comment}
$$
v_\lambda=\left(
                \begin{array}{c}
              \lambda^1\\
               \lambda^2\\
               \vdots\\
               \lambda^{n-1}\\
               \lambda^{n}
                \end{array}
              \right),
$$
\end{comment}
is eigenvector for $A_{C_n}$ of eigenvalue $\lambda+\bar{\lambda}$.

%\begin{prop}\label{unic}
%If $\lambda_1, \lambda_2, \lambda_3, \lambda_4$ are roots of unit such that
%$$\lambda_1+ \lambda_2+\lambda_3+ \lambda_4=0,$$ then:
%$(\lambda_1=-\lambda_2 \wedge \lambda_3=-\lambda_4)\vee (\lambda_1=-\lambda_3 \wedge \lambda_2=-\lambda_4)\vee (\lambda_1=-\lambda_4 \wedge \lambda_2=-\lambda_3).$
%\end{prop}

%\begin{proof}
%It is a consequence of Theorem 6 of - On linear relation between roots of unity- H. B. Mann.
%\end{proof}
Consider a permutation $p\in Sym(n)$ and the associated permutation matrix $P$. We are going to investigate under which conditions the matrix  $A_{C_n} P A_{C_n}$ is symmetric. As we already noticed, if $n$ is not divisible by $4$, the matrix $A_{C_n}$  is invertible and so by Lemma \ref{lemmainvertible} the matrix $P$ must be symmetric.
From now on, assume that $n=4k$. In the set $[n]$  we define an involution $i\mapsto i^*$, where $i^*$ is the element such that $|i-i^*|=2k$. Since $n$ is even, we have $-\lambda\in R_n$ and $\lambda^{i^*}=-\lambda^{i}$.
\begin{lemma}\label{coro}
Let $\zeta$ be an $n$-th primitive root of $1$, and let $i_1,i_2,i_3,i_4\in[n]$. Then:
$$
\zeta^{i_1}+ \zeta^{i_2}= \zeta^{i_3}+ \zeta^{i_4}\implies \left( i_1=i^*_2 \wedge  i_3=i^*_4\right) \vee \{i_1,i_2\}=\{i_3,i_4\}.
$$
\end{lemma}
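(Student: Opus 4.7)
The strategy is to split on whether the common value
\[
s \;:=\; \zeta^{i_1}+\zeta^{i_2} \;=\; \zeta^{i_3}+\zeta^{i_4}
\]
is zero or not, and to exploit in each case that the summands lie on the unit circle.

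In the case $s=0$, I would use the assumption $n=4k$ and the fact that $\zeta$ is primitive to write $\zeta^{2k}=\zeta^{n/2}=-1$. From $\zeta^{i_1}=-\zeta^{i_2}=\zeta^{i_2+2k}$ and the primitivity of $\zeta$ we get $i_1\equiv i_2+2k\pmod n$, which is exactly $i_1=i_2^{*}$ by the definition of the involution $i\mapsto i^{*}$. The same computation applied to the right-hand side yields $i_3=i_4^{*}$, so the first alternative in the conclusion holds.

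In the case $s\neq 0$, the plan is to recover the products $\zeta^{i_1}\zeta^{i_2}$ and $\zeta^{i_3}\zeta^{i_4}$ from $s$ alone. Since $|\zeta^{i_j}|=1$, conjugating the identity $\zeta^{i_1}+\zeta^{i_2}=s$ gives
\[
\bar s \;=\; \zeta^{-i_1}+\zeta^{-i_2} \;=\; \frac{\zeta^{i_1}+\zeta^{i_2}}{\zeta^{i_1}\zeta^{i_2}} \;=\; \frac{s}{\zeta^{i_1}\zeta^{i_2}},
\]
so $\zeta^{i_1}\zeta^{i_2}=s/\bar s$; the identical manipulation applied to $\zeta^{i_3}+\zeta^{i_4}=s$ forces $\zeta^{i_3}\zeta^{i_4}=s/\bar s$ as well. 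Hence both unordered pairs $\{\zeta^{i_1},\zeta^{i_2}\}$ and $\{\zeta^{i_3},\zeta^{i_4}\}$ are the two roots of the same quadratic $z^2-sz+s/\bar s=0$, and therefore coincide. Because $\zeta$ is a primitive $n$-th root of unity and the indices live in $[n]$, distinct exponents yield distinct powers, so $\{\zeta^{i_1},\zeta^{i_2}\}=\{\zeta^{i_3},\zeta^{i_4}\}$ upgrades to $\{i_1,i_2\}=\{i_3,i_4\}$, which is the second alternative.

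I do not expect a real obstacle: the only subtlety is the $s=0$ case, where one has to notice that the hypothesis $n=4k$ is precisely what guarantees $-1\in\{\zeta^j : j\in[n]\}$ and that the resulting congruence translates into $i_1=i_2^{*}$ in the sense of the involution introduced just before the lemma. Everything else is a direct application of the ``unit circle $\Rightarrow$ product determined by sum'' trick.
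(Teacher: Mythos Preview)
Your proof is correct, but it proceeds along a genuinely different line from the paper's. The paper moves everything to one side, writing $\lambda_1+\lambda_2+\lambda_3+\lambda_4=0$ with $\lambda_3=\zeta^{i_3^*}=-\zeta^{i_3}$ and $\lambda_4=\zeta^{i_4^*}=-\zeta^{i_4}$, and then invokes Mann's theorem on vanishing sums of roots of unity (Theorem~6 in \cite{mann}) to conclude that the four roots must split into two pairs each summing to zero; the three possible pairings yield exactly the disjunction in the statement. Your argument instead splits on whether the common value $s$ vanishes, and in the nonzero case uses the unit-circle identity $\bar s = s/(\zeta^{i_1}\zeta^{i_2})$ to recover the product from the sum, forcing both unordered pairs to be the root set of the same quadratic $z^2 - sz + s/\bar s$. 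This is more elementary and fully self-contained, avoiding any appeal to the classification of vanishing sums of roots of unity; the paper's route is quicker to write down but imports a nontrivial external result. Both arguments finish the same way, using the primitivity of $\zeta$ to lift equality of powers to equality of exponents in $[n]$.
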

\begin{proof}
Set $\lambda_1=\zeta^{i_1}$, $\lambda_2=\zeta^{i_2}$, $\lambda_3=\zeta^{i^*_3}$, $\lambda_4=\zeta^{i^*_4}$, then we have $\lambda_1+ \lambda_2+\lambda_3+ \lambda_4=0,$ with $\lambda_1, \lambda_2, \lambda_3, \lambda_4$ roots of unity. As a consequence of Theorem 6 in \cite{mann}, the only possibility is that
$(\lambda_1=-\lambda_2 \wedge \lambda_3=-\lambda_4)\vee (\lambda_1=-\lambda_3 \wedge \lambda_2=-\lambda_4)\vee (\lambda_1=-\lambda_4 \wedge \lambda_2=-\lambda_3)$.
Since  $\zeta$ is primitive, for $i,j\in [n]$, we have $\zeta^i=\zeta^j\implies i=j$: the thesis follows.
\end{proof}

In the spirit of the introductory remarks to Theorem \ref{proietto}, the matrix $A_{C_n} P A_{C_n}$ is symmetric  if and only if $(P-P^T)v_\lambda\in \ker A_{C_n}$, for each $ \lambda\in R_n\setminus \{\pm i\}$, since the vectors $\{v_\lambda: \lambda\in R_n, \lambda\neq \pm i \}$ are a basis for the range of $A_{C_n}$. Now we have:
$$
(P-P^T)v_\lambda=
\left(
                \begin{array}{c}
              \lambda^{p(1)}- \lambda^{p^{-1}(1)}\\
          \lambda^{p(2)}- \lambda^{p^{-1}(2)}\\
               \vdots\\
           \lambda^{p(n-1)}- \lambda^{p^{-1}(n-1)}\\
   \lambda^{p(n)}- \lambda^{p^{-1}(n)}
                \end{array}
              \right)
%$$
%We can write:
%$$
%(P-P^T)v_\lambda
=\left(
                \begin{array}{c}
              \lambda^{p(1)}+\lambda^{p^{-1}(1)^*}\\
          \lambda^{p(2)}+\lambda^{p^{-1}(2)^*}\\
               \vdots\\
           \lambda^{p(n-1)}+ \lambda^{p^{-1}(n-1)^*}\\
   \lambda^{p(n)}+ \lambda^{p^{-1}(n)^*}
                \end{array}
              \right).
$$
The condition $(P-P^T)v_\lambda \in \ker A_{C_n}$ is equivalent to:
$$
\begin{cases}
\lambda^{p(2)}+\lambda^{p^{-1}(2)^*}+ \lambda^{p(n)}+\lambda^{p^{-1}(n)^*} =0\\
    \lambda^{p(1)}+\lambda^{p^{-1}(1)^*}+ \lambda^{p(3)}+\lambda^{p^{-1}(3)^*}=0\\
    %\lambda^{p(2)}+\lambda^{p^{-1}(2)^*}+ \lambda^{p(4)}+\lambda^{p^{-1}(4)^*}=0\\
     \vdots\\
      \lambda^{p(2i)}+\lambda^{p^{-1}(2i)^*}+ \lambda^{p(2i+2)}+\lambda^{p^{-1}(2i+2)^*}=0\\
 \lambda^{p(2i-1)}+\lambda^{p^{-1}(2i-1)^*}+ \lambda^{p(2i+1)}+\lambda^{p^{-1}(2i+1)^*}=0\\
      \vdots\\
       \lambda^{p(n-2)}+\lambda^{p^{-1}(n-2)^*}+ \lambda^{p(n)}+\lambda^{p^{-1}(n)^*}=0\\
     \lambda^{p(1)}+\lambda^{p^{-1}(1)^*}+ \lambda^{p(n-1)}+\lambda^{p^{-1}(n-1)^*}=0.\\
  \end{cases}
 $$
By suitably coupling these equations, we get
$$
\begin{cases}
    \lambda^{p(i)}+\lambda^{p^{-1}(i)^*}= \lambda^{p(j)}+\lambda^{p^{-1}(j)^*}\quad
  &i \equiv j \mod 4 \\
  \lambda^{p(i)}+\lambda^{p^{-1}(i)^*}= -(\lambda^{p(j)}+\lambda^{p^{-1}(j)^*}) \quad
 &i \equiv j+2 \mod 4. \\
\end{cases}
$$
If $\zeta\in R_n$ is primitive, by virtue of Lemma \ref{coro}, the condition $(P-P^T)v_\zeta \in \ker A_{C_n}$ is equivalent to:\\
\begin{equation}\label{M0}
\begin{split}
\left(p(i),p(j) \right) &= \left( p^{-1}(i),p^{-1}(j) \right) \mbox{ or }\\
\left(p(i),p(j) \right)&=\left(p^{-1}(j)^*\!, p^{-1}(i)^*\right);
\end{split}
\end{equation}
for    $i \equiv j \mod 4$;

\begin{equation}\label{M2}
\begin{split}
\left(p(i),p(j)\right)&=(p^{-1}(i),p^{-1}(j)) \mbox{ or }\\
\left(p(i),p(j)\right)&=(p^{-1}(j),p^{-1}(i)) \mbox{ or }\\
\left(p(i),p^{-1}(i)\right)&=\left(p(j)^*,p^{-1}(j)^*\right)
\end{split}
\end{equation}
for $i \equiv j+2 \mod 4$. This is true because, for $i\neq j$, the condition  $\{p(i),p^{-1}(i)^*\}= \{p(j),p^{-1}(j)^*\}$ of Lemma \ref{coro} implies $ p(i)=p^{-1}(j)^*$ and  $p(j)=p^{-1}(i)^*.$

We are now in position to prove the following theorem.
\begin{theorem}\label{teo8}
If $n>8$,
$$
A_{C_n} P A_{C_n}= (A_{C_n} P A_{C_n})^T \iff P=P^T.
$$
In other words, there is no permutational $1$-st power of $C_n$ with respect to a nonsymmetric permutation.
\end{theorem}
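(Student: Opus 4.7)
The plan is to show that condition \eqref{M0} alone, combined with a pigeonhole principle, forces $p=p^{-1}$ whenever $n>8$. The converse implication, $P=P^T \Rightarrow A_{C_n}PA_{C_n}$ symmetric, is immediate from the symmetry of $A_{C_n}$, so I focus on the forward direction.

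Fix $i\in[n]$ and let $j\neq i$ satisfy $j\equiv i\pmod{4}$. Condition \eqref{M0} offers exactly two possibilities. In the first one, $(p(i),p(j))=(p^{-1}(i),p^{-1}(j))$, which immediately yields $p(i)=p^{-1}(i)$. In the second, $(p(i),p(j))=(p^{-1}(j)^*,p^{-1}(i)^*)$, which rearranges (using that $*$ is an involution) to $p^{-1}(j)=p(i)^*$. Suppose, toward a contradiction, that some index $i$ satisfies $p(i)\neq p^{-1}(i)$. Then the first alternative is ruled out for every $j\neq i$ lying in the residue class of $i$ modulo $4$, so the second must hold, forcing $p^{-1}(j)=p(i)^*$ simultaneously for all such $j$. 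Since $p^{-1}$ is a bijection and the right-hand side is independent of $j$, this is impossible as soon as there are two distinct such $j$.

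The role of the hypothesis $n>8$ enters only at this counting step. Writing $n=4k$, the residue class of $i$ modulo $4$ contains exactly $k$ elements, hence $k-1$ elements other than $i$. The condition $n>8$ means $k\geq 3$, which guarantees at least two admissible choices of $j$, and the contradiction goes through. We conclude that $p(i)=p^{-1}(i)$ for every $i\in[n]$, i.e., $p=p^{-1}$ and $P^T=P$.

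The main obstacle is quite mild: the pigeonhole itself is essentially one line, and condition \eqref{M2} plays no role in this direction. The bound $n>8$ is sharp for the proposed strategy, since when $n=8$ each residue class modulo $4$ contains only two elements, leaving a single $j\neq i$ available and causing the pigeonhole to collapse; this is consistent with the fact that $C_8$ is treated in a dedicated section (see Section \ref{ciclo8}) and does admit nontrivial permutational $1$-st powers with respect to nonsymmetric permutations.
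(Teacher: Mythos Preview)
Your proof is correct and follows essentially the same approach as the paper: assume some $i$ has $p(i)\neq p^{-1}(i)$, apply condition \eqref{M0} to two distinct $j\equiv i\pmod 4$ to force $p^{-1}(j_1)=p(i)^*=p^{-1}(j_2)$, and obtain a contradiction with injectivity. The paper simply instantiates $i=1$, $j_1=5$, $j_2=9$ without the general discussion, but the argument is the same pigeonhole step you describe.
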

\begin{proof}
By contradiction, suppose  $P\neq P^T$. Without loss of generality $p(1)\neq p^{-1}(1)$. By Eq. \eqref{M0} we have
$p(1)=p^{-1}(5)^*$ and $p(1)=p^{-1}(9)^*$: it implies  $p^{-1}(5)=p^{-1}(9)$, that is impossible.
\end{proof}

Observe that the graph $C_4$ belongs to the class of  complete bipartite graphs,  that we are going to analyze in detail in Section \ref{bipart}. The remaining case is the cycle graph $C_8$, which is studied in the next section.

\subsection{The cycle $C_8$} \label{ciclo8}
In order to obtain necessary conditions for the symmetry of the matrix $A_{C_8}PA_{C_8}$, we study the behavior of the permutation $p^2$. From the first line of Eq. \eqref{M0} and the first line of Eq. \eqref{M2} we have that if  $i \equiv j \mod 2$ then
$$
p^2(i)=i \iff p^2(j)=j.
$$
Then if $A_{C_8}PA_{C_8}$ is symmetric, we only have the following $4$ possibilities:
\begin{enumerate}[a)]
\item $\{i\in [8]: p^2(i)=i\}=[8]$
\item $\{i\in [8]: p^2(i)=i\}=\{2,4,6,8\}$
\item $\{i\in [8]: p^2(i)=i\}=\{1,3,5,7\}$
\item $\{i\in [8]: p^2(i)=i\}=\emptyset$.
\end{enumerate}

The case a) concerns permutations of order $2$ and then the classical zig-zag product.\\
In the case b), the even numbers are in 1-cycles or 2-cycles and there is a 4-cycle containing the odd numbers. We know that $p(1)\neq p^{-1}(1)$. Considering $i=1$ and $j=3$ in Eq. \eqref{M2} we get
$p(1)=p^{-1}(3)$ or $p(1)=p(3)^*$. On the other hand, Eq. \eqref{M0} with $i=3$ and $j=7$ gives $p(3)^*=p^{-1}(7)$, so that it must be $p^2(1)$ equal to $3$ or $7$. By an analogous argument we have:
\begin{equation}\label{valori1}
\{p^2(1),p^2(5)\}=\{3,7\}
\mbox{ and }
\{p^2(3),p^2(7)\}=\{1,5\}.
\end{equation}
As a consequence, the  only 4-cycles that could appear in the case b) are:
$$\sigma_1=(1\,3\,7\,5), \sigma_2=(1\,7\,3\,5), \sigma_3=(1\,5\,3\,7), \sigma_4=(1\,5\,7\,3).$$
By an explicit computation (our conditions, a priori, are only necessary) one can check
that $A_{C_8}PA_{C_8}$ is symmetric when $p=\sigma_i$ (and therefore also when $p$ is a product of $\sigma_i$ with a permutation of order $2$ of the even numbers).

An analogous argument in the case c) gives
\begin{equation}\label{valori2}
\{p^2(2),p^2(6)\}=\{4,8\}
\mbox{ and }
\{p^2(4),p^2(8)\}=\{2,6\}
\end{equation}
and the $4$-cycles $$\tau_1=(2\,4\,8\,6),\tau_2=(2\,8\,4\,6),\tau_3=(2\,6\,4\,8),\tau_4=(2\,6\,8\,4).$$

So we have that $p$ is a permutation of case c) if and only if $p$ is the product of a $\tau_i$ with a permutation of order $2$ of the odd numbers.
%Notice that multiplying  $\sigma_i$ or $\tau_i$ by a disjoint permutation of order $2$ does not affect the symmetry of $A_8PA_8$.

In the case d) we can apply Eq. \eqref{M0} to all pairs $(i,i^*)$, obtaining
\begin{equation}\label{coniug}
p(i)=(p^{-1}(i^*))^*,\quad \forall i \in [n];
\end{equation}
that is, $p$ is conjugated to its inverse by the permutation induced by the involution $^*$.
Moreover, we have that  Eq. \eqref{valori1} and Eq. \eqref{valori2} hold. In particular $p^4$ should be the identity or an involution and therefore $p$ is an $8$-cycle or the product of two $4$-cycles. Moreover, if $p$ were a $8$-cycle, we would have that $p^4(i)=i^*$ for any $i\in[n],$ that is $p=(a\, b\, c\, d\, a^*\,  b^*\, c^*\, d^*)$ for some $a,b,c,d \in [n]$. By applying Eq. \eqref{coniug} to $p$ we have $(a\, b\, c\, d\, a^*\,  b^*\, c^*\, d^*)=(d\, c\, b\, a\, d^*\,  c^*\, b^*\, a^*)$ that it is impossible. Thus $p$ is a product of two disjoint $4$-cycles.
If the permutation sends even (resp. odd) numbers in even (resp. odd) numbers, we are just in the case $p=\tau_i \sigma_j$. If this is not the case, the cycles alternate odd with even numbers; by Eq. \eqref{valori1} and Eq. \eqref{valori2} such a permutation must be of the form:
$$
p=(1\, a\, b \, c) ( 5\, d\, e \, f),
$$
with  $a,c,d,f$ even, $\{b,e\}=\{3,7\}$, $a^*\neq c$ and $d^*\neq f$. Finally, applying Eq. \eqref{coniug} for $i=1,5,a,$ we have that $f=a^*$, $d=c^*$ and $e=b^*$, and therefore the permutation $p$ should be of the form $p=(1\, a\, b \, c) ( 5\, c^*\, b^* \, a^*),$
where we can freely choose $b\in \{3,7\}$, $a\in \{2,4,6,8\}$ and $c$ between the two even numbers different from $a^*$. \\The corresponding $16$ permutations are:
\begin{align*}
&\gamma_1=(1\, 2\, 3 \, 4) ( 5\, 8\, 7 \, 6)\quad
\gamma_2=(1\, 4\, 3 \, 6) ( 5\, 2\, 7 \, 8)\quad
\gamma_3=(1\, 6\, 3 \, 8) ( 5\, 4\, 7 \, 2)\quad
\gamma_4=(1\, 8\, 3 \, 2) ( 5\, 6\, 7 \, 4)&\\
&\gamma_5=(1\, 2\, 7 \, 4) ( 5\, 8\, 3 \, 6)\quad
\gamma_6=(1\, 4\, 7 \, 6) ( 5\, 2\, 3 \, 8)\quad
\gamma_7=(1\, 6\, 7 \, 8) ( 5\, 4\, 3 \, 2)\quad
\gamma_8=(1\, 8\, 7 \, 2) ( 5\, 6\, 3 \, 4)&\\
&\gamma_9=(1\, 2\, 3 \, 8) ( 5\, 4\, 7 \, 6)\quad
\gamma_{10}=(1\, 4\, 3 \, 2) ( 5\, 6\, 7 \, 8)\quad
\gamma_{11}=(1\, 6\, 3 \, 4) ( 5\, 4\, 7 \, 2)\quad
\gamma_{12}=(1\, 8\, 3 \, 6) ( 5\, 2\, 7 \, 4)&\\
&\gamma_{13}=(1\, 2\, 7 \, 8) ( 5\, 4\, 3 \, 6)\quad
\gamma_{14}=(1\, 4\, 7 \, 2) ( 5\, 6\, 3 \, 8)\quad
\gamma_{15}=(1\, 6\, 7 \, 4) ( 5\, 8\, 3 \, 2)\quad
\gamma_{16}=(1\, 8\, 7 \, 6) ( 5\, 2\, 3 \, 4).&
\end{align*}
By a direct computation, one can check that all these permutations make $A_{C_8}PA_{C_8}$ symmetric.

By summarizing, the permutations $p\in Sym(8)$ such that $p^2\neq Id$ and $A_{C_8}PA_{C_8}$ is symmetric are the following $112$ permutations: $$\quad q\sigma_i,\quad  s\tau_j,\quad  \sigma_i\tau_j, \quad \gamma_k,$$ where $i,j=1,\ldots, 4$, $k=1,\ldots, 16$, and $q$ (resp. $s$) is a permutation of order at most $2$ fixing each odd (resp. even) number.

\begin{example}\label{ciclo88}
There exist permutational $1$-st powers of $C_8$ that can be obtained by a permutation of order $2$ and there exist permutational $1$-st powers of $C_8$ for which this is impossible.
For instance, consider the permutation $p_1=\tau_4=(2\,6\,8\,4)$ with associated matrix $P_1$, and suppose $q_1\in Sym(8)$ is such that  $A_{C_8}Q_1A_{C_8}=A_{C_8}P_1A_{C_8}$. The analogue of Eq. \eqref{M0} for $p_1$ and $q_1$ gives $(q_1(2),q_1(6))=(6,8)$ or $(q_1(2),q_1(6))=(8^*,6^*)=(4,2)$. In both cases $q_1^2\neq Id$. The graph with adjacency matrix  $A_{C_8}P_1A_{C_8}$  is depicted in Fig. \ref{figurafinalexample} (observe that it consists of two connected components).\\
On the other hand, let $p_2=\gamma_1=(1\, 2\, 3 \, 4) ( 5\, 8\, 7 \, 6)$, with associated matrix $P_2$. Then one can check that, by defining $q_2=(1\, 2)(3 \, 4) ( 5\, 8)( 7 \, 6)$, one has $A_{C_8}P_2 A_{C_8}=A_{C_8}Q_2 A_{C_8}$, showing that both the possibilities may occur.
\begin{figure}[h]
\begin{center}
\psfrag{4}{$4$}\psfrag{5}{$5$}\psfrag{6}{$6$}\psfrag{7}{$7$}
\psfrag{8}{$8$}\psfrag{1}{$1$}
\psfrag{2}{$2$} \psfrag{3}{$3$}  \psfrag{C8}{$C_8$}
\includegraphics[width=0.6\textwidth]{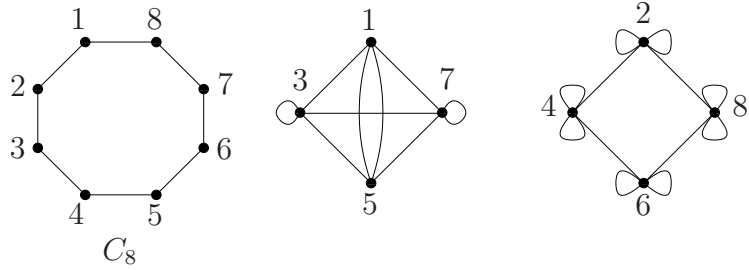} \caption{The graph $C_8$ and its permutation $1$-st power with respect to $p_1$.} \label{figurafinalexample}
\end{center}
\end{figure}
\end{example}
\begin{corollary}\label{nozig}
There exist permutational powers that cannot be expressed as zig-zag product.
\end{corollary}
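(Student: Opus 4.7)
The plan is to exhibit a single concrete counterexample, which is in fact already worked out in Example \ref{ciclo88}. I would take $H = C_8$ with $k = 1$ and the permutation $p_1 = \tau_4 = (2\,6\,8\,4)$, whose associated matrix is $P_1$. The classification in Section \ref{ciclo8} (case b) establishes that $A_{C_8} P_1 A_{C_8}$ is symmetric, so the permutational $1$-st power of $C_8$ with respect to $p_1$ is a bona fide graph (the one shown in Fig. \ref{figurafinalexample}).

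The core step is to show that no symmetric permutation matrix $Q_1$ satisfies $A_{C_8} Q_1 A_{C_8} = A_{C_8} P_1 A_{C_8}$, since a classical zig-zag product is, by Definition \ref{defizigzag}, always realized by a symmetric permutation matrix (the rotation map is an involution). For this I would apply Theorem \ref{proietto} to the pair $(P_1, Q_1)$, or equivalently transcribe the analogue of Eq. \eqref{M0} with $q_1$ in place of $p^{-1}$. As observed in Example \ref{ciclo88}, the resulting constraints force $(q_1(2), q_1(6))$ to be either $(6,8)$ or $(4,2)$, and in both cases $q_1^2 \neq \mathrm{Id}$; hence no such involution $q_1$ exists, and the permutational $1$-st power in question cannot arise as a classical zig-zag product.

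The only potential subtlety is whether the necessary conditions derived from Eq. \eqref{M0} are strong enough to rule out every involution $q_1$, rather than merely some natural candidates. This is not a real obstacle here, because the conditions pin $q_1(2)$ and $q_1(6)$ down to precisely two pairs of values, both of which are visibly incompatible with $q_1^2 = \mathrm{Id}$. Once this is in place the corollary follows immediately, and the argument doubles as a recipe (applicable to any of the non-involutive permutations from cases b), c), d) of Section \ref{ciclo8} for which the analogous exclusion of involutions $q_1$ goes through) for producing further examples.
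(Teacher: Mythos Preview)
Your proposal is correct and mirrors the paper's own (implicit) proof, which consists precisely of invoking Example \ref{ciclo88}. One small slip: the permutation $p_1=\tau_4=(2\,6\,8\,4)$ belongs to case c) of Section \ref{ciclo8} (the even numbers form the $4$-cycle), not case b).
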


\section{Equitable partitions}\label{eqp}
In this section we will use the notion of equitable partition. The main idea is that, whenever we declare equivalent vertices of the graph sharing
the same set of neighbors, we get a partition of the vertex set which is equitable. This observation allows us to deeply explore the structure of the permutations giving rise to permutational powers and, in particular, to detect those permutational powers that actually can be obtained by a classical zig-zag product.\\
\indent Equitable partitions have a number of significant applications in Graph theory: for example, the vertex set partition of a graph under the action of a group of automorphisms is always equitable. This fact has been used in the context of graph isomorphism algorithms (we refer to the book \cite{godsil} for more details).

Let $G=(V_G,E_G)$ be a graph, with $|V_G|=n$. With a given partition $\pi=\{C_1,\ldots, C_m\}$ of $V_G$ we can associate an $n\times m$ matrix $M_\pi$, called the \emph{characteristic matrix} of $\pi$, defined as follows. For each $v\in V_G$, and $i\in\{1,\ldots, m\}$, one has
\begin{equation*}
{(M_\pi)}_{v,i}=
\left\{
  \begin{array}{ll}
\frac{1}{\sqrt{|C_i|}} & \hbox{if } v\in C_i\\
  0 & \hbox{otherwise.}
  \end{array}
\right.
\end{equation*}
It is easy to check that:
\begin{equation}\label{ident}
M_\pi^TM_\pi=I_m.
\end{equation}
Moreover we can use the characteristic matrix of $\pi$ to define the $m\times m$ matrix $A_G/\pi=M_\pi^T A_G M_\pi$. This matrix represents the restriction of the matrix $A_G$ to the parts of $\pi$.
\begin{defi}\cite{schwenk}\label{equi}
For each vertex $v\in V_G$, put $B_1(v)= \{u\in V_G: v\sim u\}$, that is, $B_1(v)$ is the neighborhood of $v$ in $G$. Then a partition $\pi$ of $V_G$ is \emph{equitable} if, for all $i\in \{1,\ldots, m\}$, and all $v,w\in C_i$, one has $|B_1(v)\cap C_j|=|B_1(w)\cap C_j|$, for each $j\in \{1,\ldots, m\}$.
\end{defi}
It is known that the equitability condition of Definition \ref{equi} is equivalent to each of the following (the reader can refer to \cite{godsil80, godsil, godsil2}):
\begin{itemize}
\item
$A_GM_\pi=M_\pi (A_G/\pi)$;
\item
$M^T_\pi A_G=(A_G/\pi)M^T_\pi$;
\item
$A_G M_\pi M_\pi^T=M_\pi M_\pi^T A_G$.
\end{itemize}
Moreover, if $\pi$ is equitable, one has
$$
(M_\pi^TA_GM_\pi)_{i,j}=\sum_{h,l=1}^n m_{h,i}a_{h,l}m_{l,j} = \sum_{h\sim l}m_{h,i}m_{l,j},
$$
so that the entry $(A_G/\pi)_{i,j}$ equals the total number of edges connecting a vertex of $C_i$ to a  vertex of $C_j$, multiplied by $\frac{1}{\sqrt{|C_i||C_j|}}$. In other words, the matrix $A_G/\pi$ can be regarded, up to a suitable normalization, as the adjacency matrix of the quotient graph $G/\pi$ obtained from $G$ by taking the quotient of $V_G$ modulo the equivalence relation defined by $\pi$.

In what follows we will mostly focus on a very specific equitable partition $\hat\pi$, the one induced by the relation ``to have the same neighborhood''. This partition is natural in the context of graphs and fits
into our setting of permutational powers of $G$. It concretely corresponds to the existence of two or more rows in the matrix $A_G$ which are equal. This condition assures that $A_G$ is not invertible and this is exactly the case we are interested in, by virtue of Lemma \ref{lemmainvertible}.

\begin{defi}\label{cappuccio}
Let $G=(V_G,E_G)$. The partition $\hat\pi$ is the partition of $V_G$  such that $v, w\in V_G$ are in the same part if $B_1(v)=B_1(w)$.
\end{defi}
By definition, $\hat\pi$ is an equitable partition; we will call it the \emph{neighborhood partition} of $V_G$. The following proposition shows that $\hat\pi$ has an even stronger property.

\begin{prop}
Let $\hat\pi$ be the neighborhood partition of $V_G$. Then
\begin{equation}\label{vicini}
A_G M_{\hat\pi}M_{\hat\pi}^T=A_G =M_{\hat\pi}M_{\hat\pi}^TA_G.
\end{equation}
\end{prop}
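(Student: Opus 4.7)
The plan is to exploit the fact that $M_{\hat\pi} M_{\hat\pi}^T$ acts as an averaging operator within each part of $\hat\pi$. A direct computation from the definition of $M_{\hat\pi}$ shows that
$$
(M_{\hat\pi} M_{\hat\pi}^T)_{u,w}=
\begin{cases}
\frac{1}{|C_i|} & \text{if } u, w \in C_i \text{ for some } i,\\
0 & \text{otherwise}.
\end{cases}
$$
So right-multiplication by $M_{\hat\pi} M_{\hat\pi}^T$ replaces the $w$-th column of any matrix with the average of the columns indexed by the part of $\hat\pi$ containing $w$, and similarly for left-multiplication on rows.

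The key input is that the columns of $A_G$ indexed by vertices in the same part of $\hat\pi$ are literally equal. Indeed, if $w, w' \in C_i$, then by Definition \ref{cappuccio} we have $B_1(w)=B_1(w')$, and by the symmetry of $A_G$ this means $(A_G)_{v,w}=(A_G)_{v,w'}$ for every $v \in V_G$ (reading off the entry as $v\sim w \Leftrightarrow v\in B_1(w)=B_1(w') \Leftrightarrow v\sim w'$). Consequently, for every $v \in V_G$ and every $w \in C_i$,
$$
(A_G M_{\hat\pi} M_{\hat\pi}^T)_{v, w} = \frac{1}{|C_i|} \sum_{u \in C_i} (A_G)_{v, u} = \frac{1}{|C_i|}\cdot |C_i| \cdot (A_G)_{v, w} = (A_G)_{v, w},
$$
which gives the first equality $A_G M_{\hat\pi} M_{\hat\pi}^T = A_G$. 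The second equality is then obtained by taking transposes, since both $A_G$ and $M_{\hat\pi} M_{\hat\pi}^T$ are symmetric; equivalently, one can invoke the already-recorded characterization $A_G M_\pi M_\pi^T = M_\pi M_\pi^T A_G$ that holds for any equitable partition.

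The argument is essentially a routine two-line calculation, so there is no substantial obstacle; the one point to flag is that, in a multigraph setting, the condition $B_1(v)=B_1(w)$ used in Definition \ref{cappuccio} must be read as equality of the full rows of $A_G$ (including edge multiplicities), otherwise the two columns of $A_G$ being averaged would not coincide and the identity $A_G M_{\hat\pi} M_{\hat\pi}^T = A_G$ would fail. Under the natural reading, the proposition is precisely the statement that the averaging projection $M_{\hat\pi} M_{\hat\pi}^T$ is an identity on both the row space and the column space of $A_G$, which is a stronger property than the commutation relation valid for arbitrary equitable partitions.
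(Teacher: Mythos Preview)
Your proof is correct and follows essentially the same approach as the paper: both compute $(M_{\hat\pi}M_{\hat\pi}^T)_{u,w}$ explicitly and then observe that averaging identical rows/columns of $A_G$ leaves $A_G$ unchanged. The only cosmetic difference is that the paper verifies $M_{\hat\pi}M_{\hat\pi}^T A_G = A_G$ first and then invokes equitability, whereas you verify $A_G M_{\hat\pi}M_{\hat\pi}^T = A_G$ first and then take transposes.
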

\begin{proof}
For all $u,v\in V_G$, we have:
$$
(M_{\hat\pi}M_{\hat\pi}^T)_{u,v}=
\left\{
  \begin{array}{ll}
   \frac{1}{|C_{u,v}|} & \hbox{if $u$ and $v$ belong to the same class $C_{{u,v}}$}; \\
   0 & \hbox{otherwise}.
  \end{array}
\right.
$$
This implies:
\begin{eqnarray*}
(M_{\hat\pi}M_{\hat\pi}^TA_G)_{u,v}&=& \sum_{w\in V_G}(M_{\hat\pi}M_{\hat\pi}^T)_{u,w}a_{w,v}
= \sum_{w\in C_{{u,w}}} a_{w,v}\frac{1}{|C_{{u,w}}|}
= \sum_{w\in C_{{u,w}}} a_{u,v}\frac{1}{|C_{{u,w}}|}\\
&=&a_{u,v}\frac{|C_{{u,w}}|}{|C_{{u,w}}|}
=(A_G)_{u,v},
\end{eqnarray*}
where we have used that $a_{w,v}=a_{u,v}$, because $w$ and $u$ are in the same part of $\hat{\pi}$.
Since $\hat\pi$ is equitable, we conclude that $A_GM_{\hat\pi}M_{\hat\pi}^T=M_{\hat\pi}M_{\hat\pi}^TA_G=A_G$.
\end{proof}

In particular $G/{\hat\pi}$ is the graph where the vertices of $G$ with the same neighborhood are identified.

\begin{example}\label{vicinati}
Let $H$ be the graph on $6$ vertices, with $V_H=\{0,1,2,3,4,5\}$, of Example \ref{esempiocaramellina}. We have  $\hat{\pi}=\{C_1,C_2,C_3,C_4\}$, where $C_1=\{0,1\}$, $C_2=\{2\}$, $C_3=\{3\}$, and $C_4=\{4,5\}$. The adjacency matrix of $H$ and the characteristic matrix of $\hat{\pi}$ are, respectively:
$$
A_H=\left(
    \begin{array}{cccccc}
      0 & 0 & 1 & 0 & 0 & 0 \\
      0 & 0 & 1 & 0 & 0 & 0 \\
      1 & 1 & 0 & 1 & 0 & 0 \\
      0 & 0 & 1 & 0 & 1 & 1 \\
      0 & 0 & 0 & 1 & 0 & 0 \\
      0 & 0 & 0 & 1 & 0 & 0 \\
    \end{array}
  \right) \qquad  M_{\hat{\pi}} = \left(
            \begin{array}{cccc}
              1/\sqrt{2} & 0 & 0 & 0 \\
              1/\sqrt{2} & 0 & 0 & 0 \\
              0 & 1 & 0 & 0 \\
              0 & 0 & 1 & 0 \\
              0 & 0 & 0 & 1/\sqrt{2} \\
              0 & 0 & 0 & 1/\sqrt{2} \\
            \end{array}
          \right).
$$
A direct computation gives
$$
M_{\hat{\pi}}^TA_HM_{\hat{\pi}} = \left(
                                          \begin{array}{cccc}
                                            0 & \sqrt{2} & 0 & 0 \\
                                            \sqrt{2} & 0 & 1 & 0 \\
                                            0 & 1 & 0 & \sqrt{2} \\
                                            0 & 0 & \sqrt{2} & 0 \\
                                          \end{array}
                                        \right),
$$
which is, up to normalization, the adjacency matrix of the quotient graph $H/\hat{\pi}$ in Fig. \ref{figurequotientgraph}.
\begin{figure}[h]
\begin{center}
\psfrag{C1}{$C_1$}\psfrag{C2}{$C_2$}\psfrag{C3}{$C_3$}\psfrag{C4}{$C_4$}
\includegraphics[width=0.5\textwidth]{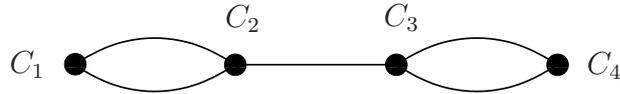} \caption{The quotient graph $H/\hat{\pi}$ of Example \ref{vicinati}.} \label{figurequotientgraph}
\end{center}
\end{figure}
\end{example}

We focus now our attention on the case of a graph $H$ for which we want to investigate permutational $k$-th powers. Observe that the neighborhood partition $\hat\pi$ can be considered also for the graph obtained by taking $k$ disjoint copies of $H$, and it is induced in a very natural way by the neighborhood partition of the vertex set $V_H$. As usual, we put $\tilde{A}_H=I_k\otimes A_H$, and $P$ is a matrix permutation acting on $k|V_H|$ elements, with $k\geq 1$.

\begin{prop}\label{onlysufficient}
Let $H=(V_H,E_H)$ be a graph with adjacency matrix $A_H$. Then vertices having the same neighborhood in $H$ produce ($k$ copies of) vertices having the same neighborhood in any permutational $k$-th power of $H$.
\end{prop}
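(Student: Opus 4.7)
The plan is to translate the hypothesis "having the same neighborhood" into an equality of rows of the adjacency matrix, and then simply track this equality through the product $\tilde{A}_HP\tilde{A}_H$. Two vertices $u,v\in V_H$ satisfy $B_1(u)=B_1(v)$ in $H$ if and only if the $u$-th and $v$-th rows of $A_H$ coincide, since the row of $A_H$ associated with a vertex is exactly the indicator of its neighborhood. Analogously, in the graph whose adjacency matrix is $M=\tilde{A}_HP\tilde{A}_H$, two vertices share the same neighborhood precisely when the corresponding rows of $M$ are equal.

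Starting from a pair $u,v\in V_H$ with $B_1(u)=B_1(v)$, I would fix a copy index $i\in\{0,1,\ldots,k-1\}$ and consider the two vertices $x=im+u$ and $y=im+v$ of the permutational $k$-th power. Writing $\tilde{A}_H=I_k\otimes A_H$ in block form, the $x$-th row of $\tilde{A}_H$ is zero outside the $i$-th block of $m$ columns and coincides with the $u$-th row of $A_H$ inside the $i$-th block; the $y$-th row of $\tilde{A}_H$ has the same shape with $v$ in place of $u$. The hypothesis $B_1(u)=B_1(v)$ then gives
\[
e_x^{\,T}\tilde{A}_H \;=\; e_y^{\,T}\tilde{A}_H.
\]
Right-multiplying this identity by $P\tilde{A}_H$ yields $e_x^{\,T}M=e_y^{\,T}M$, which is precisely the statement that $x$ and $y$ have the same neighborhood in the permutational power; by symmetry of $M$ the same holds for columns. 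Since $i$ is arbitrary, this produces the claimed $k$ pairs $\{(im+u,im+v):i=0,\ldots,k-1\}$.

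I expect no real obstacle here: the content is the trivial observation that left-multiplication by a matrix only sees the rows of the left factor, combined with the block structure of $\tilde{A}_H$. If one prefers a more intrinsic formulation, the same argument can be phrased via the identity $\tilde{A}_H\tilde{M}_{\hat\pi}\tilde{M}_{\hat\pi}^{T}=\tilde{A}_H$ (the analogue of \eqref{vicini} for the $k$-copy graph, with $\tilde{M}_{\hat\pi}=I_k\otimes M_{\hat\pi}$), from which $M\tilde{M}_{\hat\pi}\tilde{M}_{\hat\pi}^{T}=M$ follows immediately and encodes exactly the required row equalities. The one thing worth emphasizing in the final write-up is that only one implication is being proved (whence the label \emph{onlysufficient}): new identifications of neighborhoods may in principle appear in the permutational power, and the proposition does not claim otherwise.
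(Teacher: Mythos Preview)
Your proposal is correct and follows essentially the same approach as the paper: both arguments show that equal rows of $A_H$ force equal rows of $\tilde{A}_HP\tilde{A}_H$, the only difference being that the paper writes out the entry $(\tilde{A}_HP\tilde{A}_H)_{hn+r,j}$ explicitly as a sum, while you package the same computation as the row identity $e_x^{\,T}\tilde{A}_H=e_y^{\,T}\tilde{A}_H$ and right-multiply. The alternative formulation via $\tilde{A}_H\tilde{M}_{\hat\pi}\tilde{M}_{\hat\pi}^{T}=\tilde{A}_H$ that you mention is exactly the paper's Eq.~\eqref{vicini}, so that route is also in line with the paper.
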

\begin{proof}
Put $|V_H|=n$. Let $A_H=(a_{i,j})_{i,j=0,\ldots, n-1}$ be the adjacency matrix of $H$ and assume that $a_{r,j}= a_{s,j}$ for each $j=0,\ldots, n-1$, that is, the $r$-th and the $s$-th vertex of $H$ have the same neighborhood in $H$. Consider the permutational $k$-th power of $H$ induced by a permutation $p$, with associated permutation matrix $P$, and enumerate its vertices as $0,1,\ldots, kn-1$. Then, for each $h=0,\ldots, k-1$ and $j= 0,\ldots, kn-1$, we have:
\begin{eqnarray*}
(\tilde{A}_HP\tilde{A}_H)_{hn+r,j} &=& \sum_{l,m=0}^{kn-1}(\tilde{A}_H)_{hn+r,l}P_{l,m}(\tilde{A}_H)_{m,j}\\
&=& \sum_{l=0}^{kn-1}(\tilde{A}_H)_{hn+r,l}(\tilde{A}_H)_{p(l),j}\\
&=& \sum_{x_l=0}^{k-1}\sum_{y_l=0}^{n-1}(\tilde{A}_H)_{hn+r,x_ln+y_l}(\tilde{A}_H)_{p(x_ln+y_l),j}\\
&=& \sum_{y_l=0}^{n-1}a_{r,y_l}(\tilde{A}_H)_{p(hn+y_l),j},
\end{eqnarray*}
and similarly
\begin{eqnarray*}
(\tilde{A}_HP\tilde{A}_H)_{hn+s,j} &=& \sum_{y_l=0}^{n-1}a_{s,y_l}(\tilde{A}_H)_{p(hn+y_l),j}.
\end{eqnarray*}
Since $a_{r,y_l}=a_{s,y_l}$ by hypothesis, we get the claim.
\end{proof}

In Example \ref{esempiocaramellina} (Fig. \ref{straney}), observe that the vertices $0$ and $1$, and the vertices $4$ and $5$, have the same neighborhood in the graph $H$: this implies that the pairs of vertices $0$ and $1$; $4$ and $5$; $6$ and $7$; $10$ and $11$, have the same neighborhood in the permutational power of $H$.\\
\indent On the other hand, the condition of Proposition \ref{onlysufficient} is only sufficient, as it may occur that in the final graph two vertices share the same neighborhood, but the same property does not hold in the original graph. In Fig. \ref{figuracontroesempiovicinato}, we have represented the cyclic graph $C_8$, and its permutational $1$-st power produced by the permutation $p=(0\ 3\ 2\ 1)(4\ 5\ 6\ 7)$. One can directly check that the vertices $0$ and $4$ have the same neighborhood in the final graph, as well as the vertices $3$ and $7$; however, the equitable partition given by the neighborhood in $C_8$ is trivial, because there are no vertices sharing the same neighborhood in $C_8$.

\begin{figure}[h]
\begin{center}
\psfrag{4}{$4$}\psfrag{5}{$5$}\psfrag{6}{$6$}\psfrag{7}{$7$}
\psfrag{0}{$0$}\psfrag{1}{$1$}
\psfrag{2}{$2$} \psfrag{3}{$3$}
\includegraphics[width=0.5\textwidth]{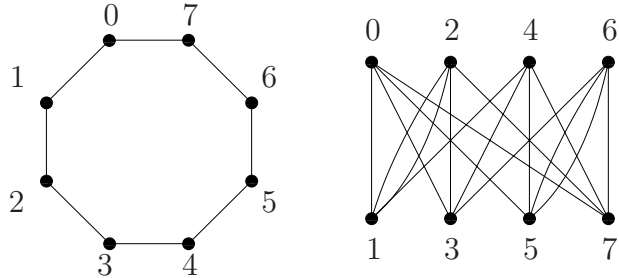} \caption{The cyclic graph $C_8$ and a permutational $1$-st power.} \label{figuracontroesempiovicinato}
\end{center}
\end{figure}

Given a matrix permutation $P$ and the partition $\hat\pi$, we define
$P/\hat\pi= M_{\hat\pi}^T P M_{\hat\pi}$ in order to describe the action of the permutation on the classes induced by $\hat\pi$. We have the following reduction result.

\begin{theorem}\label{teo1}
Let $A_H$ be the adjacency matrix of a graph $H$. Then
$$
\tilde{A}_HP_1\tilde{A}_H=\tilde{A}_HP_2\tilde{A}_H  \iff (\tilde{A}_H/{\hat\pi}) (P_1/\hat\pi ) (\tilde{A}_H/{\hat\pi})= (\tilde{A}_H/{\hat\pi}) (P_2/\hat\pi ) (\tilde{A}_H/{\hat\pi}).
$$
\end{theorem}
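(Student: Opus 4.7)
The plan is to establish two reciprocal identities between $\tilde{A}_H P \tilde{A}_H$ and its ``quotient'' $(\tilde{A}_H/\hat\pi)(P/\hat\pi)(\tilde{A}_H/\hat\pi)$; once these are in hand, both directions of the biconditional fall out immediately. The entire argument rests on Equations~\eqref{ident} and \eqref{vicini}, together with the observation (recorded in the paper just before the statement) that the neighborhood partition $\hat\pi$ on the $k$-fold disjoint union of $H$ is inherited from the one on $V_H$.

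First I would fix $M_{\hat\pi}$ as the characteristic matrix of $\hat\pi$ on the $k$ copies: it is block-diagonal with $k$ equal blocks and still satisfies $M_{\hat\pi}^T M_{\hat\pi}=I$. Applying Eq.~\eqref{vicini} blockwise yields the key relation
\[
\tilde{A}_H\, M_{\hat\pi} M_{\hat\pi}^T \;=\; M_{\hat\pi} M_{\hat\pi}^T\, \tilde{A}_H \;=\; \tilde{A}_H.
\]
Multiplying this by $M_{\hat\pi}$ on the right (resp.\ by $M_{\hat\pi}^T$ on the left) and recognising $\tilde{A}_H/\hat\pi = M_{\hat\pi}^T \tilde{A}_H M_{\hat\pi}$ produces the two absorption rules $\tilde{A}_H M_{\hat\pi} = M_{\hat\pi}(\tilde{A}_H/\hat\pi)$ and $M_{\hat\pi}^T \tilde{A}_H = (\tilde{A}_H/\hat\pi) M_{\hat\pi}^T$.

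Next, I would use these rules to insert the idempotent $M_{\hat\pi}M_{\hat\pi}^T$ on both sides of $P$ in the triple product, an insertion that leaves the outer factors unchanged by the displayed relation, and obtain
\[
\tilde{A}_H\, P\, \tilde{A}_H \;=\; M_{\hat\pi}\,\bigl[(\tilde{A}_H/\hat\pi)(P/\hat\pi)(\tilde{A}_H/\hat\pi)\bigr]\, M_{\hat\pi}^T.
\]
The reciprocal identity
\[
(\tilde{A}_H/\hat\pi)(P/\hat\pi)(\tilde{A}_H/\hat\pi) \;=\; M_{\hat\pi}^T\, (\tilde{A}_H P \tilde{A}_H)\, M_{\hat\pi}
\]
is obtained by multiplying $\tilde{A}_H P \tilde{A}_H$ on the left by $M_{\hat\pi}^T$ and on the right by $M_{\hat\pi}$, applying the absorption rules, and using $M_{\hat\pi}^T M_{\hat\pi}=I$ to recognise $P/\hat\pi$ in the middle.

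With these two formulae the theorem is a two-line check: if $(\tilde{A}_H/\hat\pi)(P_1/\hat\pi)(\tilde{A}_H/\hat\pi)=(\tilde{A}_H/\hat\pi)(P_2/\hat\pi)(\tilde{A}_H/\hat\pi)$ then the first formula gives $\tilde{A}_H P_1\tilde{A}_H=\tilde{A}_H P_2\tilde{A}_H$, while the second formula yields the converse. I expect no genuine obstacle; the only point that needs care is that $M_{\hat\pi}M_{\hat\pi}^T$ is merely an idempotent projector, not the identity, so every step must respect the asymmetry between $M_{\hat\pi}^T M_{\hat\pi}=I$ and $M_{\hat\pi}M_{\hat\pi}^T\neq I$. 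The displayed relation handles precisely this, and the rest is routine manipulation.
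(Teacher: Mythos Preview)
Your proposal is correct and follows essentially the same approach as the paper: both establish the identity $\tilde{A}_H P \tilde{A}_H = M_{\hat\pi}(\tilde{A}_H/\hat\pi)(P/\hat\pi)(\tilde{A}_H/\hat\pi)M_{\hat\pi}^T$ via Eq.~\eqref{vicini} and then the reciprocal $(\tilde{A}_H/\hat\pi)(P/\hat\pi)(\tilde{A}_H/\hat\pi) = M_{\hat\pi}^T \tilde{A}_H P \tilde{A}_H M_{\hat\pi}$ via Eq.~\eqref{ident}. The only cosmetic difference is that the paper invokes the generic equitability relations $\tilde{A}_H M_{\hat\pi}=M_{\hat\pi}(\tilde{A}_H/\hat\pi)$ and $M_{\hat\pi}^T\tilde{A}_H=(\tilde{A}_H/\hat\pi)M_{\hat\pi}^T$ as known facts, whereas you rederive them on the spot from Eq.~\eqref{vicini}; this is harmless and arguably more self-contained.
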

\begin{proof}
By Eq. \eqref{vicini},
$$
\tilde{A}_HP_i\tilde{A}_H=\tilde{A}_H M_{\hat\pi}M_{\hat\pi}^T  P_i M_{\hat\pi}M_{\hat\pi}^T \tilde{A}_H
$$
by definition of $P_i/\hat\pi$:
$$
= \tilde{A}_H M_{\hat\pi} (P_i/\hat\pi )M_{\hat\pi}^T\tilde{A}_H
$$
by equitability properties:
$$
=M_{\hat\pi} (\tilde{A}_H/{\hat\pi}) (P_i/\hat\pi ) (\tilde{A}_H/{\hat\pi}) M_{\hat\pi}^T,
$$
that is:
\begin{equation}\label{edd}
\tilde{A}_HP_i\tilde{A}_H=M_{\hat\pi} (\tilde{A}_H/{\hat\pi}) (P_i/\hat\pi ) (\tilde{A}_H/{\hat\pi}) M_{\hat\pi}^T,
\end{equation}
and then
$$
(\tilde{A}_H/{\hat\pi}) (P_1/\hat\pi ) (\tilde{A}_H/{\hat\pi})=(\tilde{A}_H/{\hat\pi}) (P_2/\hat\pi ) (\tilde{A}_H/{\hat\pi}) \implies\tilde{A}_HP_1\tilde{A}_H=\tilde{A}_HP_2\tilde{A}_H.
$$

On the other hand, by Eq. \eqref{ident},
$$
(\tilde{A}_H/{\hat\pi}) (P_i/\hat\pi ) (\tilde{A}_H/{\hat\pi})=M_{\hat\pi}^T M_{\hat\pi} (\tilde{A}_H/{\hat\pi}) (P_i/\hat\pi ) (\tilde{A}_H/{\hat\pi}) M_{\hat\pi}^T M_{\hat\pi}
$$
and  by Eq. \eqref{edd}
$$
= M_{\hat\pi}^T \tilde{A}_HP_i\tilde{A}_H M_{\hat\pi}.
$$
Therefore
\begin{eqnarray*}
(\tilde{A}_H/{\hat\pi}) (P_i/\hat\pi ) (\tilde{A}_H/{\hat\pi})= M_{\hat\pi}^T \tilde{A}_HP_i\tilde{A}_H M_{\hat\pi}
\end{eqnarray*}
and then
$$
\tilde{A}_HP_1\tilde{A}_H =\tilde{A}_HP_2\tilde{A}_H  \implies (\tilde{A}_H/{\hat\pi}) (P_1/\hat\pi ) (\tilde{A}_H/{\hat\pi})=(\tilde{A}_H/{\hat\pi}) (P_2/\hat\pi ) (\tilde{A}_H/{\hat\pi}).
$$
\end{proof}

\begin{corollary}\label{teo}
Let $A_H$ be the adjacency matrix of a graph $H$. Then
$$
\tilde{A}_HP\tilde{A}_H \mbox{ is symmetric } \iff (\tilde{A}_H/{\hat\pi}) (P/\hat\pi ) (\tilde{A}_H/{\hat\pi})\mbox{ is symmetric}.
$$
\end{corollary}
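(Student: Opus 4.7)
The plan is to derive this corollary from Theorem~\ref{teo1} by specializing it to $P_1 := P$ and $P_2 := P^T$, in complete analogy with how Corollary~\ref{proiettosim} is obtained from Theorem~\ref{proietto}. Note that $P^T$ is itself a permutation matrix (the one associated with $p^{-1}$), so this specialization is legitimate within the hypotheses of Theorem~\ref{teo1}.

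First I would record the elementary reformulation of symmetry on each side. Since $A_H$, and hence $\tilde{A}_H = I_k \otimes A_H$, is symmetric, the condition that $\tilde{A}_H P \tilde{A}_H$ be symmetric is literally
$$\tilde{A}_H P \tilde{A}_H \;=\; (\tilde{A}_H P \tilde{A}_H)^T \;=\; \tilde{A}_H P^T \tilde{A}_H,$$
which is the equation of Theorem~\ref{teo1} with $P_1 = P$, $P_2 = P^T$.

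Next I would verify the analogous reformulation on the quotient side. The two ingredients needed are that $\tilde{A}_H/\hat\pi = M_{\hat\pi}^T \tilde{A}_H M_{\hat\pi}$ is symmetric (being the congruence of a symmetric matrix), and that the quotient operation commutes with transposition in the obvious way,
$$(P/\hat\pi)^T \;=\; (M_{\hat\pi}^T P M_{\hat\pi})^T \;=\; M_{\hat\pi}^T P^T M_{\hat\pi} \;=\; P^T/\hat\pi.$$
Combining these gives
$$\bigl[(\tilde{A}_H/\hat\pi)(P/\hat\pi)(\tilde{A}_H/\hat\pi)\bigr]^T \;=\; (\tilde{A}_H/\hat\pi)(P^T/\hat\pi)(\tilde{A}_H/\hat\pi),$$
so the right-hand condition in the corollary is equivalent to the equality $(\tilde{A}_H/\hat\pi)(P/\hat\pi)(\tilde{A}_H/\hat\pi) = (\tilde{A}_H/\hat\pi)(P^T/\hat\pi)(\tilde{A}_H/\hat\pi)$.

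With both sides rewritten this way, the corollary is just Theorem~\ref{teo1} applied to the pair $(P, P^T)$. I do not expect any real obstacle here, since all the substantive content, in particular the interplay between $\tilde{A}_H$ and the characteristic matrix $M_{\hat\pi}$ expressed through Eq.~\eqref{vicini} and Eq.~\eqref{ident}, is already packaged into Theorem~\ref{teo1}; the only point that requires a line of justification is the transpose-quotient commutation, and this is immediate from the definition $P/\hat\pi = M_{\hat\pi}^T P M_{\hat\pi}$.
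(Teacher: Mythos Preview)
Your proposal is correct and follows exactly the paper's approach: the paper's proof consists of the single sentence ``It is a particular case of Theorem~\ref{teo1} when $P_1=P$ and $P_2=P^T$.'' Your additional remark that $(P/\hat\pi)^T = P^T/\hat\pi$ makes explicit the only small identification needed to match the right-hand side of Theorem~\ref{teo1} with the symmetry condition in the corollary, which the paper leaves implicit.
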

\begin{proof}
It is a particular case of Theorem \ref{teo1} when $P_1=P$ and $P_2=P^T$.
\end{proof}

%Notice that the implications from left to right in Theorem \ref{teo1} and in Corollary \ref{teo} are true for every equitable partition.

\begin{corollary}\label{coro2}
If $P_1/\hat\pi=P_2/\hat\pi$  then $\tilde{A}_HP_1\tilde{A}_H=\tilde{A}_HP_2\tilde{A}_H$. In particular, if $P/\hat\pi$ is symmetric then $\tilde{A}_HP\tilde{A}_H$ is symmetric.
\end{corollary}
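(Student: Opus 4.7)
The plan is to obtain both statements as essentially immediate consequences of Theorem \ref{teo1}, with a small algebraic observation needed for the second part.

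For the first statement, I would simply apply Theorem \ref{teo1} in the easy direction. If $P_1/\hat\pi = P_2/\hat\pi$, then of course
\[
(\tilde{A}_H/{\hat\pi})(P_1/\hat\pi)(\tilde{A}_H/{\hat\pi}) = (\tilde{A}_H/{\hat\pi})(P_2/\hat\pi)(\tilde{A}_H/{\hat\pi}),
\]
and the biconditional of Theorem \ref{teo1} yields $\tilde{A}_H P_1 \tilde{A}_H = \tilde{A}_H P_2 \tilde{A}_H$. No further work is required; the hypothesis is strictly stronger than what Theorem \ref{teo1} demands.

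For the second statement, I would set $P_1 = P$ and $P_2 = P^T$ and reduce to the first statement. The key algebraic observation is that the reduction operation $\,\cdot\,/\hat\pi$ commutes with transposition: since $M_{\hat\pi}$ has real entries,
\[
P^T/\hat\pi = M_{\hat\pi}^T P^T M_{\hat\pi} = \bigl(M_{\hat\pi}^T P M_{\hat\pi}\bigr)^T = (P/\hat\pi)^T.
\]
Therefore, if $P/\hat\pi$ is symmetric, then $P/\hat\pi = (P/\hat\pi)^T = P^T/\hat\pi$. By the first part applied to $P_1 = P$ and $P_2 = P^T$, we conclude
\[
\tilde{A}_H P \tilde{A}_H = \tilde{A}_H P^T \tilde{A}_H = (\tilde{A}_H P \tilde{A}_H)^T,
\]
so $\tilde{A}_H P \tilde{A}_H$ is symmetric, as claimed.

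There is no real obstacle: Theorem \ref{teo1} has done all the heavy lifting, and the only new ingredient is the elementary identity $P^T/\hat\pi = (P/\hat\pi)^T$. One could alternatively derive the symmetry part directly from Corollary \ref{teo}, observing that the symmetry of $P/\hat\pi$ forces $(\tilde{A}_H/\hat\pi)(P/\hat\pi)(\tilde{A}_H/\hat\pi)$ to be symmetric (since $\tilde{A}_H/\hat\pi$ itself is symmetric, being of the form $M_{\hat\pi}^T \tilde{A}_H M_{\hat\pi}$ with $\tilde{A}_H$ symmetric), but the route through the first statement seems cleanest and makes the logical dependence transparent.
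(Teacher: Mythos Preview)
Your proof is correct and follows essentially the same approach as the paper: the first part is derived immediately from Theorem \ref{teo1}, and the second part is the special case $P_1=P$, $P_2=P^T$ (the paper phrases this as an appeal to Corollary \ref{teo}, which is precisely that specialization). The only extra detail you make explicit is the identity $P^T/\hat\pi=(P/\hat\pi)^T$, which the paper leaves implicit.
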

\begin{proof}
If $P_1/\hat\pi=P_2/\hat\pi$ then $(\tilde{A}_H/{\hat\pi}) (P_1/\hat\pi ) (\tilde{A}_H/{\hat\pi})=(\tilde{A}_H/{\hat\pi}) (P_2/\hat\pi ) (\tilde{A}_H/{\hat\pi})$, and by virtue of Theorem \ref{teo1} the first claim is proved. The second claim easily follows from Corollary \ref{teo}.
\end{proof}
The following proposition answers our Question (2) in Section \ref{sectionpreliminaries}, in the context of the neighborhood partition $\hat\pi$.

\begin{prop}\label{mm}
If $P/\hat\pi$ is symmetric then there exists $Q$ of order $2$ such that $\tilde{A}_HP\tilde{A}_H=\tilde{A}_HQ\tilde{A}_H$.
\end{prop}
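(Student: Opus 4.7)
My plan is to close the argument with Corollary~\ref{coro2}: it suffices to exhibit a permutation matrix $Q$ of order at most $2$ such that $Q/\hat\pi = P/\hat\pi$, since then Corollary~\ref{coro2} delivers $\tilde{A}_HP\tilde{A}_H = \tilde{A}_HQ\tilde{A}_H$ for free. So the whole proof reduces to constructing such a $Q$.

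The first step is to translate the matrix hypothesis into a combinatorial one. Unfolding the definition $P/\hat\pi = M_{\hat\pi}^T P M_{\hat\pi}$ gives
\[
(P/\hat\pi)_{i,j} = \frac{n_{ij}}{\sqrt{|C_i|\,|C_j|}}, \qquad n_{ij} := |\{u \in C_i : p(u) \in C_j\}|.
\]
Hence the symmetry assumption on $P/\hat\pi$ is exactly the balance identity $n_{ij} = n_{ji}$ for all pairs of classes $(i,j)$. Moreover, since $p$ is a bijection, the matrix $(n_{ij})$ has both its $i$-th row sum and its $i$-th column sum equal to $|C_i|$, which guarantees the internal consistency of the data one is about to use.

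The second step is to write down $Q$ explicitly. For each class $C_i$, I would partition it as a disjoint union $C_i = \bigsqcup_j S_{ij}$ with $|S_{ij}| = n_{ij}$; this is legitimate because $\sum_j n_{ij} = |C_i|$. For each unordered pair $\{i,j\}$ with $i \neq j$, the equality $|S_{ij}| = n_{ij} = n_{ji} = |S_{ji}|$ allows me to fix an arbitrary bijection $\phi_{ij}\colon S_{ij} \to S_{ji}$, and to declare $q$ on $S_{ij} \cup S_{ji}$ to be the product of transpositions $(x\ \phi_{ij}(x))$ for $x \in S_{ij}$. On each diagonal block $S_{ii}$ I set $q$ to be the identity (one could also pair the elements up arbitrarily, but fixing them is simpler). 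The resulting $q$ is a disjoint product of $2$-cycles together with fixed points, so it is a permutation of order at most $2$, as required.

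The final step is a one-line verification: by construction $|\{u \in C_i : q(u) \in C_j\}| = |S_{ij}| = n_{ij}$ for every $i,j$, so $Q/\hat\pi = P/\hat\pi$, and Corollary~\ref{coro2} concludes. Honestly, there is no serious obstacle here; the only point that deserves momentary attention is the simultaneous compatibility of the intra-class partitions $\{S_{ij}\}_j$ across different $i$'s, and this is handed to us by the hypothesis $n_{ij} = n_{ji}$ combined with the row-sum identity $\sum_j n_{ij} = |C_i|$ coming from the fact that $p$ is a permutation.
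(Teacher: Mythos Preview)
Your proof is correct and follows essentially the same approach as the paper's: both arguments unravel the symmetry of $P/\hat\pi$ into the combinatorial identity $n_{ij}=n_{ji}$, build an involution $q$ by pairing up equal-size pieces of the classes $C_i$ and $C_j$ via arbitrary bijections (with the identity on the diagonal pieces), and conclude that $Q/\hat\pi=P/\hat\pi$, which finishes via Corollary~\ref{coro2} (equivalently Theorem~\ref{teo1}). The only cosmetic difference is that the paper uses the canonical partition $V_{i,j}^{(p)}=\{v\in C_i:p(v)\in C_j\}$ of each $C_i$, whereas you choose arbitrary subsets $S_{ij}$ of the prescribed sizes; since only the cardinalities enter the computation of $Q/\hat\pi$, this distinction is immaterial.
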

\begin{proof}
Let $\hat\pi=\{C_1,\ldots, C_m\}$ be the neighborhood partition on the vertex set of $k$ disjoint copies of $H$, naturally induced by the neighborhood partition of $V_H$.
Notice that the entry $(P/\hat\pi)_{i,j}$ counts, up to the factor $1/\sqrt{|C_i||C_j|}$, the number of elements in the class $C_i$ moved by the permutation $p$ to the class $C_j$.
We define
$$
V_{i,j}^{(p)}=\{v\in C_i: \ p(v)\in C_j\},
$$
for any $i,j\in \{1,\ldots,m\}$. Therefore $(P/\hat\pi)_{i,j}=(P/\hat\pi)_{j,i}$ if and only if $|V_{i,j}^{(p)}|=|V_{j,i}^{(p)}|$ for any $i,j$. Moreover for every $i$ there exists a partition $C_i=\cup_{j=1}^m V_{i,j}^{(p)}$. Observe that some $V_{i,j}^{(p)}$ can be empty. Since $P/\hat\pi$ is supposed to be symmetric, we have $|V_{i,j}^{(p)}|=|V_{j,i}^{(p)}|$, so that we can define a bijection $\sigma$ between these two sets. Since we can do this for any $i$ and $j$ we can extend the bijection to any of the parts $C_i$. If $\sigma(x)=y$ then we put $q(x)=y$ and $q(y)=x$. On the sets $V_{i,i}^{(p)}$ we can choose $\sigma$ to be the identity map. The $q$ we get is a permutation of order $2$ on the set of vertices. It is clear that, in this way $|V_{i,j}^{(q)}|=|V_{i,j}^{(p)}|=|V_{j,i}^{(p)}|=|V_{j,i}^{(q)}|$ and this means that $P/\hat\pi=Q/\hat\pi$. The statement follows by applying Theorem \ref{teo1}.
\end{proof}

\begin{remark}
The converse of Corollary \ref{coro2} is false. In fact, we have seen in Example \ref{ciclo88} that, in the case of the cycle $C_8$, the partition $\hat\pi$ is trivial, so that $P/\hat\pi=P$ for any matrix permutation $P$, but we showed that there exist nonsymmetric permutation matrices $P$ such that $A_{C_8}PA_{C_8}$ is symmetric. Moreover, the converse of Proposition \ref{mm} is false, since we showed in Example \ref{ciclo88} that there exist nonsymmetric permutation matrices $P$ and symmetric permutation matrices $Q$ such that  $A_{C_8}PA_{C_8}=A_{C_8}QA_{C_8}$.
\end{remark}

By arguing as in the proof of Proposition \ref{mm}, we are able to give an estimate of the number of permutations of order $2$ giving rise to the same graph in the case in which the matrix $P/\hat\pi$ is symmetric. Put $p_{i,j}=|V_{i,j}^{(p)}|$.

\begin{prop}
Let $p_{i,j}/\sqrt{|C_i||C_j|}$ be the entry $(P/\hat\pi)_{i,j}$ of the matrix $P/\hat\pi$, with $i,j=1,\ldots, m$. There are $\prod_{i,j=1}^m p_{i,j}!$ permutation matrices $Q$ of order $2$ such that $\tilde{A}_HP\tilde{A}_H=\tilde{A}_HQ\tilde{A}_H$.
\end{prop}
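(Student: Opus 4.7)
The plan is to lift the existence argument in Proposition~\ref{mm} to a counting argument. First, I would note that, by Corollary~\ref{coro2} (via Theorem~\ref{teo1}), an involution $Q$ satisfies $\tilde{A}_H Q \tilde{A}_H = \tilde{A}_H P \tilde{A}_H$ as soon as $Q/\hat\pi = P/\hat\pi$, which at the vertex level translates to the combinatorial requirement $|V_{i,j}^{(q)}| = p_{i,j}$ for every ordered pair $(i,j) \in \{1,\ldots,m\}^2$. The enumeration therefore reduces to counting involutions $q$ that realize these prescribed sizes.

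Next, following the recipe of Proposition~\ref{mm}, I would build each admissible $q$ by choosing, for each ordered pair $(i,j)$, a bijection $\sigma_{i,j}\colon V_{i,j}^{(q)} \to V_{j,i}^{(q)}$, subject to the involution constraint $\sigma_{j,i} = \sigma_{i,j}^{-1}$, together with an involution on each diagonal block $V_{i,i}^{(q)}$. Since the bijections between two sets of size $p_{i,j}$ are in bijection with the symmetric group on $p_{i,j}$ letters, each ordered pair contributes a factor $p_{i,j}!$; collecting these contributions yields the estimate $\prod_{i,j=1}^{m} p_{i,j}!$ stated in the proposition.

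The main obstacle will be the careful bookkeeping of the involution constraint, which makes the data at $(i,j)$ and $(j,i)$ interdependent through inversion and forces the diagonal contributions to be genuine involutions on $V_{i,i}^{(q)}$. I would address this by adopting, as in the proof of Proposition~\ref{mm}, a canonical reference involution (taking $V_{i,j}^{(q)} = V_{i,j}^{(p)}$ and the identity on each $V_{i,i}^{(q)}$) and identifying the factor $p_{i,j}!$ with the number of independent deformations of the reference bijection $\sigma_{i,j}$; once this identification is made, aggregating the deformations over all ordered pairs delivers the product in the statement and, via Theorem~\ref{teo1}, every such involution indeed yields the same matrix $\tilde{A}_H P \tilde{A}_H$.
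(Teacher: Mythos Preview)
Your proposal follows essentially the same route as the paper: both arguments reduce the count to the freedom in the construction of Proposition~\ref{mm}, namely the choice of a bijection $\sigma$ between $V_{i,j}^{(p)}$ and $V_{j,i}^{(p)}$ for each pair of classes, and then read off the factor $p_{i,j}!$ from the number of such bijections. The paper's proof is even terser than yours---it simply says it is enough to count the bijections from $V_{i,j}^{(p)}$ to $V_{j,i}^{(p)}$ for $j=i,\ldots,m$ and takes the product---so your added discussion of the involution constraint at $(i,j)$ versus $(j,i)$ and on the diagonal blocks is a reasonable elaboration of a point the paper leaves implicit, but it does not change the underlying strategy.
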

\begin{proof}
Since the matrix $P/\hat\pi$ is symmetric, we have $p_{i,j}=|V_{i,j}^{(p)}|=|V_{j,i}^{(p)}|=p_{j,i}$. From the proof of Proposition \ref{mm}, it is enough to count the number of possible bijections from the set $V_{i,j}^{(p)}$ to $V_{j,i}^{(p)}$ for any $i$ and $j=i,\ldots, m$, i.e., the number of bijections of $V_{i,j}^{(p)}$ to itself. Fixed $i$ and $j$ this number is $p_{i,j}!$. The result follows.
\end{proof}

\begin{example}
Consider the graph $H$ and its permutational $2$-nd power in Example \ref{esempiocaramellina} induced by the permutation $p=(0\ 4\ 6\ 3\ 7\ 5\ 1\ 8)(2\ 9)(10\ 11)$ with associated permutation matrix $P$. If $M_{\hat{\pi}}$ is the characteristic matrix of the neighborhood partition on two copies of $H$, one gets:
$$
P/\hat{\pi} = M_{\hat{\pi}}^T P M_{\hat{\pi}} = \left(
                                              \begin{array}{cccccccc}
                                                0 & 0 & 0 & 1/2 & 0 & 1/\sqrt{2} & 0 & 0 \\
                                                0 & 0 & 0 & 0 & 0 & 0 & 1 & 0 \\
                                                0 & 0 & 0 & 0 & 1/\sqrt{2} & 0 & 0 & 0 \\
                                                1/2 & 0 & 0 & 0 & 1/2 & 0 & 0 & 0 \\
                                                0 & 0 & 1/\sqrt{2} & 1/2 & 0 & 0 & 0 & 0 \\
                                                1/\sqrt{2} & 0 & 0 & 0 & 0 & 0 & 0 & 0 \\
                                                0 & 1 & 0 & 0 & 0 & 0 & 0 & 0 \\
                                                0 & 0 & 0 & 0 & 0 & 0 & 0 & 1 \\
                                              \end{array}
                                            \right)
$$
which is a symmetric matrix. By virtue of Proposition \ref{mm}, there exists a matrix permutation $Q$ of order $2$ such that $(I_2\otimes A_H)P(I_2\otimes A_H)=(I_2\otimes A_H)Q(I_2\otimes A_H)$. One can directly check that a matrix $Q$ of order $2$ satisfying this property is the permutation matrix associated with the permutation $q=(0\ 4)(1\ 8)(2\ 9)(3\ 7)(5\ 6)(10\ 11)$.
\end{example}

\begin{theorem}\label{mmm}
If $\tilde{A}_H/\hat\pi$ is invertible and $\tilde{A}_HP\tilde{A}_H$ is symmetric, then there exists $Q$ of order $2$ such that $\tilde{A}_HP\tilde{A}_H=\tilde{A}_HQ\tilde{A}_H$.
\end{theorem}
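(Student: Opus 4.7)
The plan is to reduce the statement to Proposition \ref{mm} via the quotient characterization of Corollary \ref{teo}, exploiting the fact that on the quotient side we have true invertibility (just as in Lemma \ref{lemmainvertible}).

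First I would observe that the quotient matrix $\tilde{A}_H/\hat\pi = M_{\hat\pi}^T \tilde{A}_H M_{\hat\pi}$ is symmetric, since $\tilde{A}_H$ is symmetric. This is the analogue, at the level of the neighborhood quotient, of the symmetry of $\tilde{A}_H$ that powers Lemma \ref{lemmainvertible}. Next, by Corollary \ref{teo}, the hypothesis that $\tilde{A}_HP\tilde{A}_H$ is symmetric is equivalent to the symmetry of the matrix
\[
(\tilde{A}_H/\hat\pi)\,(P/\hat\pi)\,(\tilde{A}_H/\hat\pi).
\]

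Taking the transpose of this product and using that $\tilde{A}_H/\hat\pi$ is symmetric, I obtain
\[
(\tilde{A}_H/\hat\pi)\,(P/\hat\pi)\,(\tilde{A}_H/\hat\pi) = (\tilde{A}_H/\hat\pi)\,(P/\hat\pi)^T\,(\tilde{A}_H/\hat\pi),
\]
i.e.\ $(\tilde{A}_H/\hat\pi)\bigl[(P/\hat\pi)-(P/\hat\pi)^T\bigr](\tilde{A}_H/\hat\pi)=O$. By the assumption that $\tilde{A}_H/\hat\pi$ is invertible, I may cancel on both sides, concluding that $P/\hat\pi$ is itself symmetric. This is exactly the hypothesis of Proposition \ref{mm}, which then produces a permutation matrix $Q$ of order $2$ satisfying $\tilde{A}_HP\tilde{A}_H = \tilde{A}_HQ\tilde{A}_H$, as required.

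The only subtlety, and the step I expect to be the main conceptual point rather than a true obstacle, is the argument that yields the symmetry of $P/\hat\pi$: although $P$ itself need not be symmetric, the projection to the quotient kills precisely the asymmetric part that is annihilated by $\tilde{A}_H$ on both sides, and invertibility of $\tilde{A}_H/\hat\pi$ ensures that no asymmetric remnant survives. Everything else is a bookkeeping application of Corollary \ref{teo} and Proposition \ref{mm}.
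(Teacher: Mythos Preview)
Your proof is correct and follows essentially the same approach as the paper: use Corollary \ref{teo} to pass to the quotient, invoke invertibility of $\tilde{A}_H/\hat\pi$ (together with its symmetry) to deduce that $P/\hat\pi$ is symmetric, and then apply Proposition \ref{mm}. The paper's version is terser, but your added remark that $\tilde{A}_H/\hat\pi$ is symmetric makes the cancellation step explicit.
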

\begin{proof}
If $\tilde{A}_HP\tilde{A}_H$ is symmetric, then by Corollary \ref{teo} the matrix $(\tilde{A}_H/{\hat\pi})(P/\hat\pi)(\tilde{A}_H/{\hat\pi})$ is symmetric; now, since $\tilde{A}_H/{\hat\pi}$ is invertible, we deduce that $P/\hat\pi $ is symmetric. We can now apply Proposition \ref{mm} to get the claim.
\end{proof}

Our results are useful when we analyze graphs with an adjacency matrix whose singularity depends on the repetition of some rows. The extreme example is treated in the following section.

\subsection{The complete bipartite graph}\label{bipart}
Let $K_{m,n}$ denote the complete bipartite graph on $m+n$ vertices, whose vertex set is partitioned into two sets $V_1$ and $V_2$, with $|V_1|=m$ and $|V_2|=n$, such that every vertex of $V_i$ is connected by an edge to every vertex of $V_j$, with $i\neq j$, and no edge connects vertices belonging to the same part. Let $J_{m,n}$ denote the $m\times n$ matrix whose entries are all equal to $1$. Then, up to a reordering of the vertices, the adjacency matrix of $K_{m,n}$ is
$$
A_{K_{m,n}} = \left(
                \begin{array}{c|c}
                  O & J_{m,n} \\
                  \hline
                  J_{n,m} & O \\
                \end{array}
              \right).
$$
Let us number the vertices of $V_1$ by $1,2,\ldots, m$ and the vertices of $V_2$ by $m+1,\ldots, m+n$. It is straightforward that the  neighborhood partition $\hat{\pi}$ of the vertices of $K_{m,n}$ coincides with the partition $V_{K_{m,n}} = V_1 \sqcup V_2$. In particular, the characteristic matrix $M_{\hat{\pi}}$ is the $(m+n)\times 2$ matrix
$$
M_{\hat{\pi}} = \left(
                      \begin{array}{cc}
                        1/\sqrt{m} & 0 \\
                        \vdots & \vdots \\
                         1/\sqrt{m} & 0 \\
                        0 &  1/\sqrt{n} \\
                        \vdots & \vdots \\
                        0 &  1/\sqrt{n} \\
                      \end{array}
                    \right)
$$
and it satisfies the following equalities:
$$
M_{\hat{\pi}}^TM_{\hat{\pi}} =I_2; \qquad  M_{\hat{\pi}}M_{\hat{\pi}}^T = \left(
                                                                                                  \begin{array}{c|c}
                                                                                                    \frac{1}{m}J_{m,m} & 0 \\   \hline
                                                                                                    0 &  \frac{1}{n}J_{n,n} \\
                                                                                                  \end{array}
                                                                                                \right);  \qquad    M_{\hat{\pi}}^TA_{K_{m,n}}M_{\hat{\pi}} = \left(
                                                                                                  \begin{array}{cc}
                                                                                                    0 & \sqrt{mn} \\
                                                                                                    \sqrt{mn} & 0 \\
                                                                                                  \end{array}
                                                                                                \right).
$$
The latter matrix can be regarded, up to normalization, as the adjacency matrix of the graph $K_{m,n}/\hat{\pi}$, which reduces to the complete graph on $2$ vertices.

Now let $k\geq 1$ be a positive integer and consider the disjoint union of $k$ copies of $K_{m,n}$, so that the adjacency matrix of this new graph is given by $\tilde{A}_{K_{m,n}}=I_k\otimes A_{K_{m,n}}$, and the associated characteristic matrix is $\tilde{M}_{\hat{\pi}} = I_k\otimes M_{\hat{\pi}}$. In particular:
$$
\tilde{A}_{K_{m,n}}/\hat{\pi} = \tilde{M}_{\hat{\pi}}^T\tilde{A}_{K_{m,n}}\tilde{M}_{\hat{\pi}}  = I_k \otimes \left(
                                                                                                  \begin{array}{cc}
                                                                                                    0 & \sqrt{mn} \\
                                                                                                    \sqrt{mn} & 0 \\
                                                                                                  \end{array}
                                                                                                \right).
$$
Now let $p$ be a permutation on $k(m+n)$ elements, and let $P$ be the corresponding permutation matrix. As the matrix $\tilde{A}_{K_{m,n}}/\hat{\pi}$ is nonsingular, by virtue of Corollary \ref{teo}, the matrix $\tilde{A}_{K_{m,n}}P\tilde{A}_{K_{m,n}}$ is symmetric if and only if the matrix $P/\hat{\pi} = \tilde{M}_{\hat{\pi}}^T P \tilde{M}_{\hat{\pi}}$ is symmetric. Therefore, by applying Proposition \ref{mm}, we get the following theorem.
\begin{theorem}\label{bipteo}
A permutational $k$-th power of the graph $K_{m,n}$ can always be obtained by a classical zig-zag product. That is,
if a permutation matrix $P$ is such that $\tilde{A}_{K_{m,n}}P\tilde{A}_{K_{m,n}}$ is symmetric, then there exists a symmetric permutation matrix $Q$ such that
$\tilde{A}_{K_{m,n}}P\tilde{A}_{K_{m,n}}=\tilde{A}_{K_{m,n}}Q\tilde{A}_{K_{m,n}}$.
\end{theorem}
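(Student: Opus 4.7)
My plan is to invoke Theorem \ref{mmm} directly, so the only substantive task is to verify its hypothesis, namely that $\tilde{A}_{K_{m,n}}/\hat{\pi}$ is invertible. Fortunately this verification has essentially been carried out in the discussion preceding the theorem: the neighborhood partition $\hat{\pi}$ of $V_{K_{m,n}}$ has exactly two classes $V_1$ and $V_2$, and on $k$ disjoint copies it naturally refines to $2k$ classes. The explicit computation
$$
\tilde{A}_{K_{m,n}}/\hat{\pi} = I_k \otimes \begin{pmatrix} 0 & \sqrt{mn} \\ \sqrt{mn} & 0 \end{pmatrix}
$$
immediately shows that this quotient matrix is block diagonal with $2 \times 2$ blocks of determinant $-mn \neq 0$, hence it is invertible.

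With the invertibility of $\tilde{A}_{K_{m,n}}/\hat{\pi}$ established, I would then apply Theorem \ref{mmm} to the matrix $P$: under the hypothesis that $\tilde{A}_{K_{m,n}} P \tilde{A}_{K_{m,n}}$ is symmetric, Theorem \ref{mmm} yields a permutation matrix $Q$ of order $2$ such that $\tilde{A}_{K_{m,n}} P \tilde{A}_{K_{m,n}} = \tilde{A}_{K_{m,n}} Q \tilde{A}_{K_{m,n}}$. Since $Q$ has order $2$, it is exactly a symmetric permutation matrix, and so the right-hand side realizes the permutational $k$-th power of $K_{m,n}$ as a classical zig-zag construction, as required.

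If one prefers to avoid the direct appeal to Theorem \ref{mmm}, the same argument can be unpacked into two steps: first use Corollary \ref{teo} to translate the symmetry of $\tilde{A}_{K_{m,n}}P\tilde{A}_{K_{m,n}}$ into symmetry of $(\tilde{A}_{K_{m,n}}/\hat{\pi})(P/\hat{\pi})(\tilde{A}_{K_{m,n}}/\hat{\pi})$; then, because $\tilde{A}_{K_{m,n}}/\hat{\pi}$ is invertible, conclude that $P/\hat{\pi}$ itself is symmetric; and finally apply Proposition \ref{mm} to produce the desired $Q$ of order $2$.

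I do not foresee a real obstacle here: the complete bipartite graph is essentially the ideal test case for the machinery of Section \ref{eqp}, precisely because its singularity is entirely accounted for by repeated rows, and the quotient graph $K_{m,n}/\hat{\pi} \cong K_2$ is as nondegenerate as possible. The only point requiring a bit of care is ensuring that the characteristic matrix $\tilde{M}_{\hat{\pi}} = I_k \otimes M_{\hat{\pi}}$ is used consistently so that the quotient of $\tilde{A}_{K_{m,n}}$ is indeed the Kronecker product displayed above; beyond this bookkeeping, the proof is a one-line deduction from Theorem \ref{mmm}.
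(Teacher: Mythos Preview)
Your proposal is correct and matches the paper's own argument: the text immediately preceding the theorem computes $\tilde{A}_{K_{m,n}}/\hat{\pi}$ explicitly, observes it is nonsingular, and then deduces the result via Corollary \ref{teo} and Proposition \ref{mm} (which is exactly the content of Theorem \ref{mmm}). There is no substantive difference between your two variants and the paper's proof.
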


In order to investigate the symmetry of $P/\hat{\pi}$, observe that $P/\hat{\pi}$ is a square matrix of size $2k$, whose rows and columns can be indexed by the pairs $(x,y)$, where the copy index $x$ varies in $\{1,\ldots, k\}$ and the part index $y$ varies in $\{1,2\}$. Therefore, one has symmetry if and only if the number of elements in the part $y$ of the copy $x$ which are sent to elements in the part $y'$ of the copy $x'$ equals the number of elements in the part $y'$ of the copy $x'$ which are sent to elements in the part $y$ of the copy $x$. Here below an explicit example in the case $k=2, m=3, n=5$.

\begin{example}
Consider the following permutation of $16$ elements
$$
p = \left(
      \begin{array}{ccc|ccccc|ccc|ccccc}
        1 & 2 & 3 & 4 & 5 & 6 & 7 & 8 & 9 & 10 & 11 & 12 & 13 & 14 & 15 & 16 \\
        4 & 10 & 16 & 8 & 2 & 9 & 15 & 14 & 6 & 13 & 3 & 7 & 11 & 5 & 12 & 1 \\
      \end{array}
    \right),
$$
with associated permutation matrix $P$. It is easy to check that:
$$
\tilde{M}_{\hat{\pi}} = I_2 \otimes \left(
                                                                                               \begin{array}{cc}
                                                                                                 1/\sqrt{3} & 0 \\
                                                                                                 1/\sqrt{3} & 0 \\
                                                                                                 1/\sqrt{3} & 0 \\
                                                                                                 0 & 1/\sqrt{5} \\
                                                                                                 0 & 1/\sqrt{5} \\
                                                                                                 0 & 1/\sqrt{5} \\
                                                                                                 0 & 1/\sqrt{5} \\
                                                                                                 0 & 1/\sqrt{5} \\
                                                                                               \end{array}
                                                                                             \right)
\qquad P/\hat{\pi} = \left(
                                              \begin{array}{cccc}
                                                0 & 1/\sqrt{15} & 1/3 & 1/\sqrt{15} \\
                                                1/\sqrt{15} & 1/5 & 1/\sqrt{15} & 2/5 \\
                                                1/3 & 1/\sqrt{15} & 0 & 1/\sqrt{15} \\
                                                1/\sqrt{15} & 2/5 & 1/\sqrt{15} & 1/5 \\
                                              \end{array}
                                            \right)
$$
and therefore the matrix $(I_2\otimes A_{K_{3,5}})P(I_2\otimes A_{K_{3,5}})$ is symmetric. A permutation $Q$ of order $2$ satisfying the property
$(I_2\otimes A_{K_{3,5}})P(I_2\otimes A_{K_{3,5}})= (I_2\otimes A_{K_{3,5}})Q(I_2\otimes A_{K_{3,5}})$ is $Q= (1\ 5)(2\ 11)(3\ 16)(6\ 9)(7\ 12)(8\ 14)(10\ 13)$, constructed as in the proof of Proposition \ref{mm}.
\end{example}

%%%%%%%%%%%%%%%%%%%%%%%%%%%%%%%%%%%%%%%%%%%%%%%%%%%%%%%%%%%%%%%%%%%%%%%%%%%%%%%%%%%%

\end{document}